\numberwithin{equation}{section}
\def \beq{\begin{equation}}
\def \eeq{\end{equation}}
\def \i{\infty}
\def \ep{\epsilon}
\def \a{\alpha}
\begin{document}
\begin{small}
\title{  Vanishing viscosity solution to a $2 \times 2$ system of conservation laws with linear damping}
\author{ Kayyunnapara Divya Joseph$^1$\\
 Department of Mathematics, \\
 Indian Institute of Science Education and Research Pune,\\
 Pune 411008, India 
}

\maketitle
\numberwithin{equation}{section}
\numberwithin{equation}{section}
\newtheorem{theorem}{Theorem}[section]
\newtheorem{remark}[theorem]{Remark}

\begin{abstract}
 Systems of the first order partial differential equations with singular solutions appear in many multiphysics problems and the weak formulation of the solution involves, in many cases, the product of distributions. In this paper, we study such a system derived from Eulerian droplet model for air particle flow. This is a 2 x 2 non - strictly hyperbolic system of conservation laws with linear damping. We first study a regularized viscous system with variable viscosity term, obtain a weak asymptotic solution with general initial data and also  get the solution in Colombeau algebra. We  study the vanishing viscosity limit and show that this limit is a distributional solution. Further, we study the large time  asymptotic behaviour of the viscous system. This  important system is not very well studied due to  complexities in the analysis.  As far as we know, the only work done on this system is for Riemann type of initial data. The significance of this paper is that we work on the system having  general initial data and not just initial data of the Riemann type. \\
{\bf  AMS Subject Classification:} {35A20, 35L40, 35F50, 35F55} \\
{\bf Keywords:} multiphysics problem,  non - strictly  hyperbolic system, vanishing viscosity, bounded variation \\
{ \bf ORCID:} Kayyunnapara Divya Joseph https://orcid.org/0000-0002-4126-7882 \\
{ \bf E-Mail: } divediv3@gmail.com \\  
{ \bf Ph. No.:} +917259149455
\end{abstract}

\section{Introduction} 
One of the main features in the study of solutions of the initial value problem for nonlinear first order evolution equations is the finite time break down of classical solution, even if the initial data is smooth. Continuing  the solution beyond this time and getting a global in time solution involves in many cases,  the product of distributions. The Schwartz impossibility result shows that, there is no algebra with some basic properties, that contain the space of distributions,  which allow such products. Since  the product of distributions naturally arise  in applications in physical problems, there are many attempts to give meaning to the products. For example, one of the unresolved problems is the solution of the continuity equation in $R^d$,
\[
\rho_t +\nabla.(u \rho)=0,
\]
where $u$ is the solution of the multi-dimensional inviscid Burgers equation 
\[
u_t +(u . \nabla) u=0
\]
in space variables $x \in R^d$ which involves the non-conservative product. This problem comes in the large scale structure formation in the universe, $u$ is velocity and $\rho$ the density. Here, the continuity equation is a linear equation with non-smooth coefficient $u$. We can expect atmost BV regularity for the coefficient $u$.  For works related this problem, we refer to the works of Shandarin and Zeldovich \cite{Shandarin}, S. N. Gurbatov and A.I. Saichev \cite{Saichev}, S. Albeverio and  V. M. Shelkovich \cite{Albeverio}.  For a general theory on non-conservative systems, we refer to the works, Colombeau
  \cite{Colombeau1,Colombeau2,Colombeau3}, Egorov \cite{Egrov}, Dal Maso, LeFloch,Murat \cite{Dalmaso}, Volpert \cite{Volpert}. \\

 In this paper we focus on three theories,  the method of weak asymptotics, the notion of Volpert product and the theory of Colombeau algebra, 
 in context of the construction of  the solution to a specific system of conservation laws with linear damping given by, 
\begin{equation}
 \begin{aligned}
&\frac{\partial u}{ \partial t} + \frac{\partial }{ \partial x} ( \frac{u^2}{2} ) + \a u=0, \\
&\frac{\partial v}{ \partial t} + \frac{\partial }{ \partial x} (u v) =0, 
\end{aligned}
\label{e1.1}
\end{equation}
where $\a \neq 0$ is a constant, with initial data of the form 
\begin{equation}
 \begin{aligned}
u(x, 0)= u_0(x), \,\,\, v(x, 0)= v_0(x).
\end{aligned}
\label{e1.2}
\end{equation}
When $\a>0$, the system \eqref{e1.1} is derived from the Eulerian droplet model  for air particle flow for smooth solutions, see  \cite{Keita}. Here, $\a$ is the drag coefficient between  air and the particles, $v \geq 0$ and $u$ are the volume fraction and velocity of the particles respectively.  The Riemann problem for the Euler droplet model, was studied in \cite{Keita} and that for \eqref{e1.1}, was studied in  \cite{Richard}. The solutions have  common features, namely the formation of delta-shocks in the solution.  The first equation of \eqref{e1.1} for $u$, has the effect of non-linear convection and linear damping. The solution space for $u$ is the space of functions of bounded variation. The second equation in \eqref{e1.1} for $v$, is a linear equation with a non-smooth coefficient and the solution space is the space of bounded Borel measures. In fact, this system has repeated eigenvalue $u$ of multiplicity $2$ with one dimensional eigenspace, so  it is a  non-strictly hyperbolic system.  So the standard  theory of strictly-hyperbolic systems in \cite{Glimm, Lax} does not work. For the mathematical analysis of the system \eqref{e1.1}, we need to use new formulations of generalised solutions, which is not well developed. The main aim of this paper is to construct the explicit solution for the problem \eqref{e1.1} -\eqref{e1.2} in  different frameworks of weak solutions. In this construction of solutions, the vanishing viscosity approximations play an important role.

When $\a =0$, the  first equation in system \eqref{e1.1} is commonly called the Burger's equation that  was first derived in 1915 by Bateman  \cite{Bateman}, later reinvented by Burgers  \cite{Burgers} in 1948.  It is useful in understanding various phenomenon like shock waves in gas dynamics, see the work of Hopf \cite{Hopf}. In fact,
when $\a =0,$ $u$ is constant along the characteristics defined by $\frac{dx}{dt}=u$  and these characteristics are straight lines.  For non-increasing initial data,  the characteristics starting at different points of  initial line meet  at some later time and shocks are generated. \\

The system \eqref{e1.1}  with $\alpha=0$ is used in modelling the evolution of density inhomogeneities in matter in the universe, see  Shandarin and Zeldovich \cite{Shandarin}. 
There are more recent applications involving kinetic models of stochastic production flows, such as the flows of products through a factory or supply chain. When expanded into deterministic moment equations, these kinetic models lead to  system \eqref{e1.1} with $\a=0$, see \cite{a1, a2,  Forestier}. The particles  moving along the characteristics lines collide,  stick together, become massive particles and due to this concentration phenomenon, $v$ generally becomes  a measure. 
Systems of this type  were analysed by many authors, see S. N. Gurbatov and A.I. Saichev \cite{Saichev}. S. Albeverio and  V. M. Shelkovich \cite{Albeverio}  worked on this system of zero pressure gas dynamics by finding the specific  type of solutions for Cauchy problem with piecewise-smooth
initial data. They proved that, these are $\delta$ waves solutions  which are related with
the concentration process on the surface which carries the singularities.  In the papers \cite{Joseph1,Joseph2,LeFloch, Tan},  the well-posedness of the Cauchy problem to  this system \eqref{e1.1} with $\alpha=0$ was  analyzed. In all these papers, the study  of the system  generally involves very complex analytical techniques. When $\a\neq 0$, we have
the same phenomenon, but the characteristics are not straight lines and  $u$ is not constant along the characteristics. Indeed, $\frac{du}{dt}$ along the characteristics is $- \a u$, since   along the characteristic curve   
\begin{equation*}
 \frac{d u(x(t), t)}{dt} = u_t + u_x \frac{dx}{dt}=  u_t + u u_x = -\a u.
\end{equation*}
 The exploration of new ideas for the general system \eqref{e1.1},  make  works   on this system  very important.  Richard de la Cruz \cite{Richard} studied the Riemann problem to \eqref{e1.1}. In the Riemann problem,  the initial data $u_0,v_0$ are of the form
\begin{equation*}
u_0(x)= u_l + (u_r -u_l)H(x),\,\,\,v_0(x)= v_l + (v_r -v_l)H(x),
\end{equation*}
where $H(x)$ is the Heaviside function and  $u_l ,u_r, v_l, v_r$ are constants. He used a vanishing viscosity approximation  with variable viscosity coefficient of the form

\begin{equation}
 \begin{aligned}
&\frac{\partial u}{ \partial t} + \frac{\partial }{ \partial x} ( \frac{u^2}{2} ) + \a u =  {\ep} e^{- \a t}  \frac{{\partial}^2 u}{ \partial x^2}, \\
&\frac{\partial v}{ \partial t} + \frac{\partial }{ \partial x} (u v) =  {\ep} e^{- \a t}  \frac{{\partial}^2 v}{ \partial x^2},
\end{aligned}
\label{e1.3}
\end{equation}
and used the generalized Hopf Cole transformation to get the  solution for the approximation in explicit form. By letting $\ep \rightarrow 0$, he got an  explicit formula for the Riemann problem to \eqref{e1.1}. 

In our work, we get  the exact solution to the Cauchy problem to system \eqref{e1.1} with general initial condition \eqref{e1.2} in different spaces. First, we review different notions of solution based on the weak asymptotic method, Colombeau algebra, the Volpert product and distribution theory. All these notions of solution has an underlining physical regularization and we use the viscous approximation \eqref{e1.3}.  We construct  the regularized problem explicitly and get the solution in the weak asymptotic sense, the weak solution in Colombeau algebra and the solution to \eqref{e1.1} in a distributional sense  with  general initial data \eqref{e1.2}. While doing so, we get an explicit formula for  the vanishing viscosity limit to this system and the large time behaviour of the viscous system \eqref{e1.3} with general initial data. This regularisation is justified, since we show the vanishing viscosity solution satisfies the Lax entropy inequality for speed $u$. The significance of the present work is due to the consideration of general initial data, for the system \eqref{e1.1} which makes the analysis  complicated and it is not yet attempted before. This gives the work its importance.  \\

 We do not have a uniqueness result for the initial value problem for \eqref{e1.1}. Uniqueness of the $u$ component of the solution that we constructed  follows by Kruzkov theory, see \cite{Kruzkov}. Since $v$ is a measure and satisfies a linear equation with a discontinuous coefficient,  the standard method of $L^1$ estimate is not applicable and the uniqueness of $v$ remains an open problem.  One of the physical selection criteria for the solution  is by the vanishing viscosity limit and we used this method in this paper. \\

 The rest of the paper is organised in the following way. In section 2, we introduce different  formulations of the weak solutions. In section 3, we construct the approximate solution obtained by a parabolic regularisation and  we study  the large time  asymptotic behavior. In section 4, we construct the weak solution to the Cauchy problem and the last section provides a  conclusion to our study.

\section {Weak formulation of solutions} In this section, we review the different notions of solutions based on the weak asymptotic method, the Volpert product and Colombeau algebra. We start with the weak asymptotic method.

\subsection {Weak asymptotic method}
The weak asymptotic method has its root in the works of Maslov \cite{Maslov}. This method is highly sucessful in the study of nonlinear waves, see \cite{Albeverio,  Danilov, Egrov, Shelkovich} for discussions on this topic and many important applications. We say that a family of smooth functions $f^\epsilon$ is a regularization of a distribution $f$ if $f^\epsilon$ converges to $f$ in distribution as $\epsilon$ goes to zero. The idea is to construct approximate regularized solutions, which involve a positive parameter $\epsilon$ going to zero, and satisfy the differential equation upto $O(\epsilon^j)$, $j$ being a positive number uniformly in time $t>0$. In the context of the system \eqref{e1.1}-\eqref{e1.2}, we have the following definition. 
{ \flushleft
\textbf{Definition 2.1.}  a) A family of smooth functions $\left( u^{\epsilon}, v^{\epsilon} \right)_{\epsilon >0}$ defined on $R^1 \times [0,\infty)$ is called a \textit{weak asymptotic solution} to the system
\eqref{e1.1} with initial conditions \eqref{e1.2} provided: \\
i)  For all  $\phi \in C_c^{\infty}(-\infty, \infty)$ and  for each $T>0$,
\begin{equation}
\begin{aligned}
\int_R{ ( u^\epsilon_t(x, t) + u^\epsilon(x, t)  u^\epsilon_x(x, t) + \a u^{\ep}(x, t)) \phi(x) dx} &=o(1),\\
\int_R{ ( v^\epsilon_t(x, t)  + {(v^\epsilon(x, t)  u^\epsilon(x, t) )}_x ) \phi(x)  \,\,\, dx} &= o(1),\\  
\end{aligned}
\label{e2.1}
\end{equation}
holds as $\epsilon \rightarrow 0, $ uniformly with respect to  $t\in [0, T]$,  i.e. \eqref{e2.1} holds for all $t\in [0, T]$ as $\epsilon \rightarrow 0.$   \\
ii) As $\epsilon \rightarrow 0,$ for all  $\phi \in C_c^{\infty}(0, \infty),$  
\begin{equation}
\begin{aligned}
\int_R{ ( u^\epsilon(x,0)-u_0(x) ) \phi(x)  dx} &=o(1),\\
\int_R {( v^\epsilon(x,0)- v_0(x) ) \phi(x)  dx} &=o(1),
\end{aligned}
\label{e2.2}
\end{equation}
must  holds. \\
}
{ \flushleft
\textbf{Definition 2.2.} A distribution $(u,v)$ is called a \textit{generalized  solution} for the Cauchy problem \eqref{e1.1},\eqref{e1.2},
if it is the distributional limit of a weak asymptotic solution $(u^{\epsilon},v^{\epsilon})_{\epsilon >0}$ as $\epsilon \rightarrow 0$. \\
}
\vspace{0.25 cm}
We refer to \cite{Albeverio, Danilov, Joseph3, Shelkovich} and the references therein for the theory and many applications of this theory.

\subsection{Functions of bounded variation and Volpert product}
In order to define the notion of distributional solutions, we should make sense of each term in the equation. The main difficulty is to define the term $uv$ in the equation. This cannot be done for arbitrary distributions. 
In  Volpert theory, the product of distributions is defined when they have some regularity properties.
Here, we recall some basic facts  in the theory of functions of bounded variation from \cite{Volpert, Dalmaso}.
Let  $\Omega$ be an open set in $R^n$ and let $BV(\Omega, R^1)$ be the space of all Lebesgue integrable functions from $\Omega$  to $R^1$
whose first order distributional derivatives $\partial_{x_j}$ are bounded Borel measures on $\Omega$. This space is called {\bf  the space of  functions of bounded variation} from $\Omega$ to $R^1$. If $u \in BV(\Omega,R^1)$, the distributional gradient of $u$ is a vector valued  bounded Borel measure and its   {\bf  total variation} is defined by
\[
||Du||=Sup\{ \int_\Omega u \nabla. \phi  dx : \phi \in C_c^1(\Omega,R^n), ||\phi ||_{\infty}\leq 1 \}.
\]
 $BV(\Omega, R^1)$ is a Banach space with the norm defined by
\[
||u||_{BV(\Omega,R^p)} =||u||_{L^1(\Omega)}+||Du||.
\]
Similarly, we can define $BV(\Omega,R^p)$ as the space of functions of bounded variation from $\Omega$ to $R^p$ and 
$u =(u_1,...u_p) \in BV(\Omega,R^p)$ iff $u_j \in BV(\Omega,R^1)$.
 Let $u \in L^1(\Omega,R^p)$, we say a point $y_0\in \Omega$ is a Lebesgue point or a point of approximate continuity if there exists a vector $\bar{u}(y_0)$ in $R^p$ such that
\[
\frac{1}{\rho^n}\int_{B_{\rho}(y_0)}|u(y)-\bar{u}(y_0)|dy=0.
\]
A point $y_0\in \Omega$ is a point of approximate jump for $u$ if there exists two vectors $u_{-}(y_0), \,\,\, u_{+}(y_0)$ in $R^p$ and a unit vector $\nu_u=({\nu_{u}}^j)_{1 \leq j \leq n}$ in $R^n$ such that $u_{-}(y_0)\neq u_{+}(y_0)$ and
\[
\frac{1}{\rho^n}\int_{{B_\rho}^{\pm}(y_0)}|u(y)-\bar{u}_{\pm}(y_0)|dy=0.
\]
Here 
$B_\rho(y_0)= \{y \in \Omega: |y-y_0|<\rho \}$, the disc in $R^n$ of radius $\rho$ with center $y_0$ and ${B_\rho}^{\pm}(y_0)= \{y \in B_\rho(y_0) : \pm(y-y_0,\nu_u(y_0))>0  \}$ are half discs.

The triplet $(u_{-}(y_0),u_{+}(y_0),\nu_u(y_0))$ is uniquely determined up to a change of sign of $\nu_u(y_0)$.
 For the general $u \in L^1(U,R^p)$, one can write 
\[
\Omega=\Omega_u \cap S_u \cap I_u,
\]
where $\Omega_u$ is the set of points of approximate continuity, $S_u$ is the set of points of approximate jump and $I_u$ contains the irregular points.
Volpert proved that for $u \in BV(\Omega,R^p)$, 
\[
H^{n-1}(I_u)=0,
\]
where $H^{n-1}$ is the $n-1$ dimensional Hausdorff measure.
The functions $u_{\pm} :S_u \rightarrow R^p$ and $\nu_u :S_u \rightarrow S^{n-1}$ are measurable with respect to the Hausdorff measure $H^{n-1}$. The basic result on Volpert product is the following theorem, we state the simpler form of Volpert from \cite{Dalmaso}.\\
\begin{theorem}
Let $u \in BV( \Omega,R^p)$  and $g: R^p \rightarrow R^p$ be a locally bounded Borel function.  Then, there exists  a unique family of real valued bounded Borel measures $\mu_j$ on $\Omega, j=1,...n$  characterized by  the three properties:
\begin{itemize}
\item If $B$ is a Borel subset of $\Omega_u$, then
\[
\mu_j(B)=\int_B g(u)\partial_{x_j}u.
\]
\item If B is a Borel subset of $S_u$, then
\[
\mu_j(B)=\int_B \int_0^1 g((1-s) u_{-}(y)+s(u_{+}(y)-u_{-}(y)(u_{+}(y)-u_{-}(y))ds {
\nu_{u}}^j dH^{n-1}(y).
\]
\item If $B$ is a Borel subset of $I_u$,
\[
\mu_j(B)=0.
\]
\end{itemize}
\end{theorem}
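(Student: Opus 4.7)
The plan is to build $\mu_j$ by a piecewise definition along the Volpert decomposition $\Omega = \Omega_u \cup S_u \cup I_u$, and to read off uniqueness from the three prescribed conditions. The structural ingredient I would invoke is the Federer--Volpert refinement of the Lebesgue decomposition of the vector measure $\partial_{x_j}u$: for $u \in BV(\Omega, R^p)$ one writes $\partial_{x_j} u = D^{ac}_{x_j}u + D^c_{x_j}u + D^j_{x_j}u$, in which the absolutely continuous and Cantor parts are supported on $\Omega_u$ (they do not charge $H^{n-1}$-null sets), while the jump part is concentrated on $S_u$ with the explicit representation $D^j_{x_j}u(B)=\int_B (u_+ - u_-)\,\nu_u^j\, dH^{n-1}$ for Borel $B\subset S_u$. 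Combined with the fact (recalled in the preamble) that $H^{n-1}(I_u)=0$, this distributes the total derivative cleanly among the three pieces.

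I would then define $\mu_j$ piecewise. On a Borel set $B\subset \Omega_u$, the trace $\bar u$ exists $H^{n-1}$-a.e.\ and hence $\partial_{x_j}u$-a.e., so setting $\mu_j(B) := \int_B g(\bar u(y))\, d(\partial_{x_j} u)(y)$ is unambiguous and produces a Borel measure controlled in total variation by $\sup_K |g|\cdot |\partial_{x_j}u|(B)$ for any compact $K$ containing the essential range of $u$. On Borel $B \subset S_u$ I put
\[
\mu_j(B) := \int_B \Big(\int_0^1 g\bigl((1-s)u_-(y)+s u_+(y)\bigr)\, ds\Big)\,\bigl(u_+(y)-u_-(y)\bigr)\,\nu_u^j(y)\, dH^{n-1}(y),
\]
which makes sense because $u_\pm$ and $\nu_u$ are $H^{n-1}$-measurable on $S_u$. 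For $B\subset I_u$ I set $\mu_j(B):=0$, and for a general Borel $B\subset \Omega$ I extend additively by $\mu_j(B)=\mu_j(B\cap \Omega_u)+\mu_j(B\cap S_u)+\mu_j(B\cap I_u)$. Countable additivity and finite total variation are inherited from those of $\partial_{x_j}u$ and of the Hausdorff measure on $S_u$. Uniqueness is then immediate: the three conditions pin down $\mu_j$ on every Borel subset of each piece of the disjoint union $\Omega_u\cup S_u\cup I_u$, hence on every Borel subset of $\Omega$ by additivity.

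The step I expect to be the main obstacle is justifying the averaged formula on $S_u$ as the canonical choice, i.e.\ showing that the weight $\int_0^1 g((1-s)u_- + s u_+)\,ds$ is forced rather than a convention. The cleanest way is to verify that if $u_\delta := u * \eta_\delta$ with a standard mollifier $\eta_\delta$, then $g(u_\delta)\,\partial_{x_j}u_\delta \rightharpoonup \mu_j$ as $\delta \to 0$: away from the jump set this is a dominated convergence argument on the Lebesgue and Cantor parts, but at a jump point one must parametrise the transition of $u_\delta$ from $u_-$ to $u_+$ across a thin layer and apply the change of variables $s = (u_\delta - u_-)/(u_+ - u_-)$, which is precisely what produces the $\int_0^1 g((1-s)u_- + s u_+)\,ds$ average. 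This mollification computation is what makes Volpert's definition canonical and is the crux of the argument.
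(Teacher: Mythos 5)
The paper does not prove this statement: it is quoted (as ``the simpler form of Volpert'') from Volpert and from Dal Maso--LeFloch--Murat, so there is no internal proof to compare yours against. Taken on its own terms, your existence-and-uniqueness argument is the standard and correct one for the statement as written: the three conditions prescribe $\mu_j$ on every Borel subset of each piece of the disjoint decomposition $\Omega=\Omega_u\cup S_u\cup I_u$, so uniqueness is immediate by additivity, and existence reduces to checking that the prescribed formulas define bounded Borel measures. The ingredients you invoke for that check are the right ones: $|Du|$ does not charge $H^{n-1}$-null sets (so $g(\bar u)$ is defined $|\partial_{x_j}u|$-a.e.\ on $\Omega_u$), and the jump part has the representation $\int_B(u_+-u_-)\nu_u^j\,dH^{n-1}$ with $\int_{S_u}|u_+-u_-|\,dH^{n-1}<\infty$, which is what bounds the measure on $S_u$ (note it is this quantity, not $H^{n-1}(S_u)$ itself, that is finite in general). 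One caveat you inherit from the statement rather than introduce: with only $u\in BV$ and $g$ locally bounded, $g(\bar u)$ need not be bounded, so your ``compact $K$ containing the essential range'' step tacitly assumes $u\in L^\infty$, as Volpert and Dal Maso--LeFloch--Murat do.

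Your last paragraph, however, overreaches. The claim that mollification forces the straight-line average $\int_0^1 g((1-s)u_-+su_+)\,ds$ is only correct for scalar $u$ (that is, $p=1$); the change of variables $s=(u_\delta-u_-)/(u_+-u_-)$ is meaningless for vector-valued $u$, and for $p\ge 2$ the weak limit of $g(u_\delta)\,\partial_{x_j}u_\delta$ genuinely depends on the path traced by $u_\delta$ across the jump layer --- which is precisely why Dal Maso--LeFloch--Murat develop the theory for general families of paths and why the straight-line choice is a convention, not a consequence. Fortunately the theorem as stated asserts nothing about limits of regularizations, so this does not affect the validity of your existence and uniqueness argument; just do not present the canonicity claim as part of the proof.
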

{ \flushleft
\textbf{Definition 2.3.} The measure $\mu_j$ is called the non-conservative product of $g(u)$ and $\partial_{x_j}u$.\\
\vspace{0.25 cm}

This result is used in the definition of the the term $uv$ in \eqref{e1.1} and in the formulation of the weak solution in the sense of distributions, which we shall explain now. In our case, $\Omega =\{(x,t) : x \in R, t>0 \}$ and $p=2$.
 Let $(u,V)$ be a pair of functions of bounded variation.  Then  we know that, the distribution $v=V_x$ is a bounded Borel measure. \\
\vspace{0.25 cm}
\textbf{Definition 2.4.} Let $(u,v)$ be a distribution, where $u$ is a function of bounded variation and $v=V_x$ with $V$ as a function of bounded variation. $(u,v)$ is called a distributional solution for \eqref{e1.1} if 
\[
u_t +(u^2/2)_x +\alpha u=0,\,\,\, v_t +(\bar{u}v)_x=0
\]
in the sense of distribution, namely 
for all $\phi \in C_c^1(\mathbb{R} \times(0,\infty))$,
\[
\begin{aligned}
&\int_{R^1\times(0,\infty)}  u(x,t) \phi_t(x,t)+\frac{(u(x,t)^2)}{2} \phi_x( x,t)  - \alpha u(x,t) \phi(x,t) dx dt =0,\\
&\int_{R^1\times(0,\infty)} v \phi_t(x,t) dx dt+\int_{R^1\times(0,\infty)} \phi_x (\bar{u} v) =0.
\end{aligned} 
\]
}
\vspace{0.25 cm}

The formulation of the distributional solution for the first equation is standard. For the second equation, the term $\bar{u} v=\bar{u} V_x$ is a measure called the Volpert product of $u$ and $V_x$  given by Theorem 2.1. It  has an absolutely continuous part and a singular part where $\bar{u}$ is the averaged superposition of $u$, which we shall explain now.

 Since  $u$ and $V$ are  functions of bounded variation,  we  decompose the domain of definition  with respect to 
$(u,V)$, with a slightly different notation than introduced in Theorem 2.1:
\[
 \mathbb{R} \times[0, \infty) =S_c \cup S_j \cup S_0
\]
where $S_c$  are points of approximate continuity of $(u,V)$, $S_j$ 
are points of approximate jump of $(u,V)$ and $S_0$ is a set with one dimensional 
Hausdorff-measure zero. At any point $(x,t) \in S_j$, the left limit   $(u(x-, t),V(x-, t))$ and 
the right limt $(u(x+, t),V(x+, t))$  of $(u(x,t),V(x,t))$ exist. Further, for any 
continuous  real valued function $g$ on $R^1,$  the Volpert product 
$g(u)v =g(u)V_x$ is  defined as a Borel measure using the averaged superposition of $g(u)$  defined in Volpert \cite{Volpert} as follows:

\begin{equation}
\overline{g(u)}(x,t) =
 \begin{cases} \displaystyle
      {g(u(x,t),\,\,\, if \,\,\,(x,t) \in S_c,} 
\\\displaystyle
        { \int_0^1 g((1-\alpha)(u(x-,t)+\alpha u(x+,t))d\alpha
                      ,\,\,\, if (x,t) \in S_j }.
\end{cases}
\label{e2.3}
\end{equation}
For  any Borel subset $A$ of $S_c$,
\[
[g(u)V_x](A)=\int_{A}\overline{g(u)}(x,t)V_x
\]
with $\overline{g(u)}$ as in \eqref{e2.3}. If $(x,t) \in S_j$,
\[
[g(u)V_x](\{(x,t)\})=\overline{g(u)}(x,t)(V(x+0,t)-V(x-0,t)).
\]
Any point $(s(t),t) \in S_j$,  is a point on the  curve of discontinuity $x=s(t)$ of $(u, V)$.  Note that $s(t)$ is given by the Rankine Hugoniot condition 
for the equation for $u$:
\[
\frac{ds}{dt}= \frac{u(s(t)+,t)+u(s(t)-,t)}{2}.
\]
and hence $s(t)$ is a Lipschitz function. For a Borel subset $A$ of $S_j$, $[g(u)V_x](A)$ is an integral of $[g(u)V_x](\{(s(t),t)\})$ with respect to  $t$ over the set. The measure $g(u)V_x$ is zero on $S_0$.\\

We use these calculations to prove the existence of weak solutions of \eqref{e1.1}-\eqref{e1.2} in the sense of distributions.

\subsection{Solution in the sense of Colombeau}
In this section, we consider a larger class of functions as initial data, adopt the approach of Colombeau \cite{Colombeau1,Colombeau2,Colombeau3} and
construct the solution  of \eqref{e2.1} and \eqref{e2.2} with equality
replaced by association in the sense of Colombeau. This approach takes
into account not only the final limit but also the microscopic structure of 
the shock, due to the viscous effects in the solutions.

First, we describe the algebra of generalized functions of Colombeau in 
$\Omega = \{(x,t), x\in R^1, t>0\}$ denoted
by ${\mathcal G}(\Omega)$. Let $C^\infty(\Omega)$ be the class of 
infinitely differentiable functions 
in $\Omega$ and consider the infinite product ${\mathcal E}(\Omega)=
[C^\infty(\Omega)]^{(0,1)}$.
Thus, any element $v$ of ${\mathcal E}{(\Omega)}$ is a map from $(0,1)$ 
to
$C^\infty(\Omega)$
and is denoted by $v=(v^\epsilon)_{0<\epsilon<1}$. An element 
$v=(v^\epsilon)_{0<\epsilon<1}$ is called moderate, if  given a compact
subset K of $\Omega$
and $j, \,\,\,  \ell$ non negative integers, there exists $N>0$ such that
\begin{equation}
\parallel \partial^j_t \partial^{\ell}_x v^\epsilon
\parallel_{L^\infty
(K)} ={\mathcal {O}}(\epsilon^{-N}) ,
\label{e2.4}
\end{equation}
as $\epsilon$ tends to $0$. An element $v=(v^\epsilon)_{0<\epsilon<1}$ 
is called null,
if for all compact subsets K of $\Omega$, for all nonnegative integers
$j,\,\, \ell$ and for all $M>0$,
\begin{equation}
\parallel \partial^j_t \partial^{\ell}_x v^\epsilon \parallel_
{L^\infty(K)} = {\mathcal {O}}(\epsilon^{M}) ,
\label{e2.5}
\end{equation}
as $\epsilon$ goes to $0$. The set of all moderate elements is denoted
by ${\mathcal E_M}(\Omega)$ and the set of null elements is denoted by 
${\mathcal N}(\Omega)$. It is easy to
see that ${\mathcal E_M}(\Omega)$ is an algebra with partial 
derivatives, the
operations being defined pointwise on representatives and
${\mathcal N}(\Omega)$
is an ideal which is closed under differentiation. The quotient space 
denoted
by
\begin{equation*}
{\mathcal G}{(\Omega)} =\frac{{\mathcal 
E_M}(\Omega)}{{\mathcal 
N}(\Omega)}
\end{equation*}
is an algebra with partial derivatives, the operations being defined on
representatives. The algebra ${\mathcal G}(\Omega)$ is called the 
algebra of generalized functions of Colombeau.

{ \flushleft
\textbf{Definition 2.5.}
\begin{enumerate}
\item Two elements $u$ and $v$ in ${\mathcal G}(\Omega)$
are said to be { \it associated,} if for some (and hence all) representatives
$(u^\epsilon)_{0<\epsilon<1}$ and $(v^\epsilon)_{0<\epsilon<1}$, of $u$ 
and
$v$ , $u_\epsilon -v_\epsilon$ goes to $0$ as $\epsilon$ tends to $0$
in 
the
sense of distribution and is denoted by "$u \approx v$". \\
\item  We say $(u,v) \in {\mathcal G}(\Omega) \times {\mathcal G}(\Omega)$ is { \it a weak solution of \eqref{e1.1}   in the sense of association,} if
\begin{equation}
\begin{aligned}
u_t + (\frac{u^2}{2})_x +\alpha u& \approx 0\\
v_t + (u\rho)_x & \approx 0.
\end{aligned}
\label{e2.20}
\end{equation}
\end{enumerate}
}
Here we remark 
that,
this notion is different from the notion of equality in ${\mathcal 
G}(\Omega)$, which means that $u-v \in {\mathcal N}(\Omega)$ or in 
other words,
\[
\parallel \partial^j_t \partial^{\ell}_x (u^\epsilon-v^\epsilon)
 \parallel_{L^\infty(K)} = {\mathcal {O}}(\epsilon^{M})
\]
for all M, for all compact
subsets K of $\Omega$ and for all $j, \,\,\, \ell$ non-negative integers.\\
We refer to the works \cite{Colombeau1,Colombeau2,Colombeau3,Dalmaso, Volpert, Egrov} and the references 
therein,
that use Colombeau algebra to find global solutions 
of the initial value problems when the non-conservative product appears
in the equation.

In the next section, we obtain an explicit   solution to \eqref{e1.3}  with general initial conditions \eqref{e1.2} and 
use it  as the regularized solution to construct  a  weak  solution to the inviscid system \eqref{e1.1}-\eqref{e1.2} for a general class of initial data.

\section{\bf Construction of regularized approximate solution.}

In this section, we consider the regularized  system 

\begin{equation}
 \begin{aligned}
&\frac{\partial u}{ \partial t} + \frac{\partial }{ \partial x} ( \frac{u^2}{2} ) + \a u =  {\ep} e^{- \a t}  \frac{{\partial}^2 u}{ \partial x^2}, \\
&\frac{\partial v}{ \partial t} + \frac{\partial }{ \partial x} (u v) =  {\ep} e^{- \a t}  \frac{{\partial}^2 v}{ \partial x^2},
\end{aligned}
\label{e3.1}
\end{equation}
in the space - time domain $\{(x,t) : x \in \mathbb{R}, t>0\}$ with initial conditions
\begin{equation}
 \begin{aligned}
u(x, 0)= u_0(x), \,\,\, v(x, 0)= v_0(x), x \in \mathbb{R}
\end{aligned}
\label{e3.2}
\end{equation}
and we construct an explicit solution to this Cauchy problem.

\subsection{Explicit formula for the approximate solution}

Before stating the results, we introduce some functions which appear in the analysis of solution.
Fix $(x, t)$ for  $x \in \mathbb{R}, t>0$ and given $u_0 \in L^{\infty}(\mathbb{R})$,  set 
\begin{equation}
 \begin{aligned}
\theta(x, y, \tau) = \frac{{(x-y)}^2}{ 2\tau(t)} + \int_0^y{ u_0(s) ds} , \,\,\,\tau(t)=\frac{1-e^{- \a t}}{\alpha}. 
\end{aligned}
\label{e3.3}
\end{equation}
With these notations, we state the main existence results of this section.
\begin{theorem}
Let $u_0 \in L^\infty( \mathbb{R})$ and $v_0$  is in $L^1( \mathbb{R})$. Then, the following functions 
\begin{equation}
u^\ep(x,t)=e^{-\a t}{     \frac{  \int_{-\i}^{\i}{ \frac{(x-y)}{\tau(t)} e^{- \frac{\theta(x,y,\tau(t))}{2\ep} } dy}     }{      \int_{-\i}^{\i}{  e^{- \frac{\theta(x,y,\tau(t))}{2\ep} } dy}  } }, 
V^\ep(x,t)={     \frac{  \int_{-\i}^{\i}(\int_0^y v_0(s) ds )  e^{- \frac{\theta(x,y,\tau(t))}{2\ep} } dy     }{      \int_{-\i}^{\i}{  e^{- \frac{\theta(x,y,\tau(t))}{2\ep} } dy}  } }
\label{e3.4}
\end{equation}
are well defined  $C^\infty$ functions of $(x,t),$  for $x\in  \mathbb{R}, t>0$ and the pair of functions $(u^\ep,v^\ep)$ with $v^\ep=\partial_x V$ is the unique solution to \eqref{e3.1} with initial condition \eqref{e3.2}. \\
\end{theorem}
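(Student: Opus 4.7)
The plan is to reduce the regularised system to two copies of the one-dimensional heat equation via a time change and a generalised Hopf--Cole transformation, and then read off the stated formulas from the heat kernel representation. First, I introduce the time change $\tau=(1-e^{-\alpha t})/\alpha$, so that $\tau'(t)=e^{-\alpha t}$ and $\tau(0)=0$, and substitute $u(x,t)=e^{-\alpha t}w(x,\tau(t))$ in the first equation. The chain rule produces a term $-\alpha e^{-\alpha t}w$ that cancels the damping $\alpha u$, while the factor $\tau'(t)=e^{-\alpha t}$ together with an $e^{-\alpha t}$ from the nonlinearity combine with $\epsilon e^{-\alpha t}$ on the right to leave Burgers' equation $w_\tau+ww_x=\epsilon w_{xx}$ with initial datum $w(x,0)=u_0(x)$. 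The Hopf--Cole substitution $w=-2\epsilon(\log\psi)_x$ converts this into the heat equation $\psi_\tau=\epsilon\psi_{xx}$ with $\psi_0(x)=\exp(-\tfrac{1}{2\epsilon}\int_0^x u_0(s)\,ds)$; because $u_0\in L^\infty$, $\psi_0$ is strictly positive and of at most exponential growth, so the heat kernel formula gives $\psi(x,\tau)=(4\pi\epsilon\tau)^{-1/2}\int_{\mathbb{R}}e^{-\theta(x,y,\tau)/(2\epsilon)}\,dy$ after collecting the two exponents into $\theta$. Differentiating $-2\epsilon\psi_x/\psi$ under the integral cancels the $\tau$-dependent prefactor, and multiplying by $e^{-\alpha t}$ then reproduces the first formula in \eqref{e3.4}.

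For the $v$-equation I use that if $V$ solves the linear transport--diffusion equation $V_t+uV_x=\epsilon e^{-\alpha t}V_{xx}$, then $v=V_x$ automatically solves the conservative form $v_t+(uv)_x=\epsilon e^{-\alpha t}v_{xx}$ simply by differentiating in $x$. The same time change $V(x,t)=\Phi(x,\tau)$ reduces this to $\Phi_\tau+w\Phi_x=\epsilon\Phi_{xx}$, and the crucial step is the substitution $\Phi=\chi/\psi$: using $\psi_\tau=\epsilon\psi_{xx}$ together with $w=-2\epsilon\psi_x/\psi$, the first-order terms involving $\chi_x$ and the quadratic terms in $\psi_x$ cancel exactly, leaving the heat equation $\chi_\tau=\epsilon\chi_{xx}$. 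Imposing $V(x,0)=\int_0^x v_0(s)\,ds$, which is equivalent to $v(x,0)=v_0(x)$, forces $\chi_0(x)=\psi_0(x)\int_0^x v_0(s)\,ds$; the bound $v_0\in L^1$ ensures $\int_0^y v_0$ is bounded, so $\chi_0$ lies in a class on which the heat kernel acts. Dividing the heat kernel representation of $\chi$ by that of $\psi$ cancels the common prefactor and yields the second formula in \eqref{e3.4}.

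Smoothness of $u^\epsilon$ and $V^\epsilon$ on $\mathbb{R}\times(0,\infty)$ then follows by differentiating under the integral, which is justified for every $\tau>0$ by the Gaussian decay in $y$ combined with the bounds on $\psi_0$ and $\psi_0\int_0^y v_0$. The initial conditions are recovered as $t\to 0^+$, equivalently $\tau\to 0^+$, from the standard approximation-to-the-identity property of the heat kernel applied to $\psi$ and to $\chi/\psi$. Uniqueness in the class of classical solutions of moderate growth reduces, via the cascade of substitutions (time change, Hopf--Cole, $\Phi=\chi/\psi$), to uniqueness for the Cauchy problem of the heat equation in Widder's class. The main obstacle I anticipate is bookkeeping rather than any single deep step: one must check that each substitution is reversible (in particular that $\psi$ stays strictly positive, which follows from positivity of $\psi_0$ and the maximum principle, or directly from the heat kernel representation), that differentiation under the integral is justified at each level, and that the cancellations matching the coefficient $-2\epsilon$ from Hopf--Cole carry through exactly in the $\Phi=\chi/\psi$ computation.
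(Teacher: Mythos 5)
Your proposal is correct and follows essentially the same route as the paper: reduce the potential form of the system to heat equations via the time change $\tau=(1-e^{-\alpha t})/\alpha$ and the generalized Hopf--Cole substitutions $w=-2\epsilon(\log\psi)_x$, $\Phi=\chi/\psi$, then read off \eqref{e3.4} from the heat kernel (the paper merely performs Hopf--Cole before the time change rather than after, which is immaterial). Your remarks on positivity of $\psi$, recovery of the initial data, and uniqueness via Widder's class are in fact more explicit than the paper's own treatment of those points.
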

\begin{proof}
 Note that if $(\hat{U},\hat{V})$ solves
\begin{equation}
 \begin{aligned}
&\frac{\partial \hat{V}}{ \partial t} + e^{-\a t}\frac{\partial  \hat{U}}{ \partial x}  \frac{\partial \hat{V} }{ \partial x} =  {\ep} e^{- \a t}  \frac{{\partial}^2  \hat{V}}{ \partial x^2}  , \\
&\frac{\partial   \hat{U}}{ \partial t} + \frac{e^{-\a t}}{2}(\frac{\partial  \hat{U}}{ \partial x } )^2  =  {\ep} e^{- \a t}  \frac{{\partial}^2  \hat{U}}{ \partial x^2}
\end{aligned}
\label{e3.5}
\end{equation}
with initial data 
\begin{equation}
\hat{U}(x, 0)=\int_0^x u_0(s) ds,\,\,\,\hat{V}(x, 0)=\int_0^x v_0(s) ds,
\label{e3.6}
\end{equation}
then 

\begin{equation}
 \begin{aligned}
(u^\epsilon,v^\epsilon)=(e^{-\a t} \hat{U}_x,\hat{V}_x )
\end{aligned}
\label{e3.7}
\end{equation} 
solves \eqref{e3.1} with initial condition \eqref{e3.2}.

We use the Hopf-Cole transformation  \cite{Hopf, Joseph1,Richard},
\begin{equation}
 \begin{aligned}
\hat{U}= - 2 {\ep} \log{S^{\ep}}, \,\,\,
\hat{V} = \frac{C^{\ep}}{S^\ep}. 
\end{aligned}
\label{e3.8}
\end{equation} 
to reduce the problem \eqref{e3.5}-\eqref{e3.6} to  the system
\begin{equation*}
 \begin{aligned}
  e^{ \a t} \frac{\partial C^{\ep}}{ \partial t}  =  {\ep}  \frac{{\partial}^2 C^{\ep}}{ \partial x^2}, \,\,\,
e^{ \a t} \frac{\partial S^{\ep}}{ \partial t}  =  {\ep}  \frac{{\partial}^2 S^{\ep}}{ \partial x^2}  
\end{aligned}
\end{equation*}
with initial conditions
\[
S^\ep(x,0)=\exp{\{^{-\frac{\int_0^x u_0(s)ds}{2 \ep}}}\},\,\,\,C^\ep(x,0)=\int_0^x v_0(s) ds . \exp{\{^{-\frac{\int_0^x u_0(s)ds}{2 \ep}}\}}.
\]
Using  $ \tau = \frac{1- e^{ - \a t}}{ \a } >0$, $C^\ep(x,t)=\tilde{C}^{\ep}(x,\tau),  S^\ep(x,t)=\tilde{S}^{\ep}(x,\tau)$,  our problem is reduced to solving
\begin{equation*}
 \begin{aligned}
   \frac{\partial \tilde{C}^{\ep}}{ \partial { \tau}}  =  {\ep}  \frac{{\partial}^2 \tilde{C}^{\ep}}{ \partial x^2}, \,\,\,
 \frac{\partial \tilde{S}^{\ep}}{ \partial { \tau}}  =  {\ep}  \frac{{\partial}^2 \tilde{S}^{\ep}}{ \partial x^2}  
\end{aligned}
\end{equation*}
with initial conditions 
\begin{equation*}
 \begin{aligned}
\tilde{S}^{\ep}(x, 0)= \exp{\{-\frac{\int_0^x u_0(s)ds}{2 \ep}}\},\,\,\,\tilde{C}^\ep(x,0)=\int_0^x v_0(s) ds . \exp{\{^{-\frac{\int_0^x u_0(s)ds}{2 \ep}}\}}.
\end{aligned}
\end{equation*}
Solving for $(\tilde{S^\ep}, \tilde{C^{\ep}})$, we obtain
\begin{equation*}
 \begin{aligned}
&\tilde{C}^{\ep}(x, \tau)= \frac{1}{ \sqrt{4 \pi \tau \ep }} \int_ \mathbb{R}{ \exp\{ \frac{-{(x-y)}^2}{4 \tau \ep}  \} }
 ( \int_0^y v_0(s) ds ) \exp{\{^{-\frac{\int_0^y u_0(s)ds}{2 \ep}}\}}dy,\\
&\tilde{S}^{\ep}(x, \tau)= \frac{1}{ \sqrt{4 \pi \tau \ep }} \int_ \mathbb{R}{ \exp\{ \frac{-{(x-y)}^2}{4 \tau \ep}  \} \exp{\{-\frac{\int_0^y u_0(s)ds}{2 \ep}}\}  dy}.
\end{aligned}
\end{equation*}
We rewrite this formula using $\theta(x,y, \tau)$, to get
\begin{equation}
 \begin{aligned}
&\tilde{C}^{\ep}(x, \tau)= \frac{1}{ \sqrt{4 \pi \tau \ep }} \int_ \mathbb{R} { (\int_0^y v_0(s) ds)  \exp\{- \frac{\theta(x,y,\tau)}{2  \ep}  \}  }dy,\\
&\tilde{S}^{\ep}(x, \tau)= \frac{1}{ \sqrt{4 \pi \tau \ep }} \int_ \mathbb{R}{ \exp\{- \frac{\theta(x,y,\tau)}{2  \ep}  \} } dy.
\end{aligned}
\label{e3.9}
\end{equation}
 Now using \eqref{e3.7} and \eqref{e3.8},  we can express $(u^\ep, v^\ep)$ in terms of  $\tilde{C}^\ep, \tilde{S}^\ep$ and their first
$x$ derivatives as follows:
\begin{equation}
 \begin{aligned}
u^{\ep}= -2 \ep e^{ - \a t} \frac{  \frac{ \partial \tilde{S}^{\ep}}{\partial x}}{ \tilde{S}^{\ep}}, \,\,\,  
v^{\ep}= \frac{ \partial }{ \partial x} ( \frac{ \tilde{C}^{\ep} }{ \tilde{S}^{\ep}} ).   
\end{aligned}
\label{e3.10}
\end{equation}
Since the integrals are absolutely convergent, we can differentiate under the integral sign and get
\begin{equation}
 \frac{ \partial \tilde{S}^{\ep}}{\partial x} =-\frac{1}{2 \ep} \frac{1}{ \sqrt{4 \pi \tau \ep }} \int_ \mathbb{R} \frac{(x-y)}{\tau}{ \exp\{- \frac{\theta(x,y,\tau)}{2 \ep}  \} } dy.
\label{e3.11}
\end{equation}
 Substituing the formulae \eqref{e3.9} and \eqref{e3.11} in \eqref{e3.10}, we get the expressions for $u^\ep$ and $V^\ep$ as given in \eqref{e3.4} and  also an expression for $v^\ep$. 
\end{proof}

\subsection{Large time asymptotic behavior of approximate solution}

In this section, we study the large time asymptotic behaviour of solutions  $(u^\epsilon,v^\epsilon)$
of  \eqref{e3.1} and \eqref{e3.2}, constructed in the previous section. First, we write the formula in the following way.
\begin{equation}
 \begin{aligned}
u^{\ep}= -2 \ep e^{ - \a t} \frac{  \frac{ \partial \tilde{S}^{\ep}}{\partial x}}{ \tilde{S}^{\ep}}, \,\,\,  
v^{\ep}= \frac{ \partial }{ \partial x} ( \frac{ \tilde{C}^{\ep} }{ \tilde{S}^{\ep}} )    
\end{aligned}
\label{e3.12}
\end{equation}
where
\begin{equation}
 \begin{aligned}
&\tilde{C}^{\ep}(x, \tau)= \frac{1}{ \sqrt{4 \pi \tau \ep }} \int_ \mathbb{R} { V_0(y)  \exp\{- \frac{\theta(x,y,\tau)}{2 \tau \ep}  \}  }dy\\
&\tilde{S}^{\ep}(x, \tau)= \frac{1}{ \sqrt{4 \pi \tau \ep }} \int_ \mathbb{R}{ \exp\{- \frac{\theta(x,y,\tau)}{2 \tau \ep}  \} } dy
\end{aligned}
\label{e3.13}
\end{equation}
and
\begin{equation}
 \begin{aligned}
&\theta(x, y, \tau) = \frac{{(x-y)}^2}{ 2\tau(t)} +U_0(y),\,\,\tau(t)=\frac{1-e^{- \a t}}{\a},\,\,U_0(y)= \int_0^y{ u_0(s) ds} ,\,\, V_0(y)=\int_0^y v_0(s) ds. 
\label{e3.14}
\end{aligned}
\end{equation}
The asymptotic form of the solution depends on the sign of $\a$ because
\begin{equation}
\lim_{t\rightarrow \infty}\tau(t)= \begin{cases} \displaystyle
      {1/\a,\,\,\, if \,\,\,\a>0,} 
\\\displaystyle
        { \infty 
                      ,\,\,\, if \a<0 }.
\end{cases}
\label{e3.15}
\end{equation}
We introduce the following functions to describe the asymptotic form.
\begin{equation}
\begin{aligned}
G(x)= \int_ \mathbb{R}{  \exp\{- \frac{1}{2 \ep }  \theta(x, y,  \frac{1}{\alpha})\} dy }, \,\,\, \,\,\,
H(x) = \int_ \mathbb{R}{  V_0(y)  \exp\{- \frac{1}{2 \ep }  \theta(x, y,  \frac{1}{\alpha})\} dy },
\end{aligned}
\label{e3.16}
\end{equation}
and
\begin{equation}
\begin{aligned}
G_\infty(\xi)&=
e^{-{U_0(+\infty)\over\epsilon}}\int_{-\infty}^{\xi }
e^{-y^2/2} dy+
e^{-{U_0(-\infty)\over\epsilon}}\int_{\xi}^{\infty}
e^{-y^2/2} dy, \\
H_\infty(\xi) &=  
e^{-{U_0(+\infty)\over\epsilon}} V_0(\infty)\int_{-\infty}^{\xi }
e^{-y^2/2} dy+
e^{-{U_0(-\infty)\over\epsilon}}V_0(-\infty)\int_{\xi}^{\infty}
e^{-y^2/2} dy,
\end{aligned}
\label{e3.17}
\end{equation}
where $\xi = \frac{x}{ \sqrt{2 \ep \tau(t)}}$.

\begin{theorem}
Let $u_0 \in L^\infty( \mathbb{R})\cap L^1( \mathbb{R})$ and $v_0$ is a function of bounded variation which is in $L^1( \mathbb{R})$, then we have the following asymptotic form for the solution $(u^\epsilon,v^\epsilon)$ of \eqref{e3.1} with initial condition \eqref{e3.2}.\\
 Case 1: When $\alpha>0$
\begin{equation}
\begin{aligned}
&\lim_{t \rightarrow \infty}\frac{-1}{ 2 \ep} e^{\alpha t} u^{\ep}(x, t) = \frac{ \frac{ d G }{ d x} }{G}, \,\,
&  \lim_{t \rightarrow \infty} v^{\ep}(x, t) = \frac{d }{ d x} ( \frac{ H(x) }{G(x)} ) 
\end{aligned}
\label{e3.18}
\end{equation}
and this limit is uniform with respect to $x$ on compact subsets of $\mathbb{R}$.\\
 Case 2: When $\alpha<0$
\begin{equation}
\begin{aligned}
\lim_{t \rightarrow \infty}\frac{-\sqrt{\tau(t)}}{ \sqrt{2 \ep} }e^{\alpha t} u^{\ep}(x, t) = \frac{ \frac{dG}{d\xi} }{G}, \,\,\,
 \lim_{t \rightarrow \infty} \sqrt{2 \epsilon \tau(t)}v^{\ep}(x, t) =  \frac{d}{d\xi}( \frac{ H(\xi)}{G(\xi)} )
\end{aligned}
\label{e3.19}
\end{equation}
and this limit is uniform with respect to $\xi$ on compact subsets of $\mathbb{R}$.
\end{theorem}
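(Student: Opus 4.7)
The proof naturally splits along the dichotomy in (3.15). In both regimes, the plan is to pass to the limit $t\to\infty$ inside the integrals (3.9) defining $\tilde{S}^{\epsilon}$ and $\tilde{C}^{\epsilon}$, justify differentiation under the integral sign in $x$, and then substitute the resulting limits into the Hopf--Cole expressions (3.12) for $(u^{\epsilon},v^{\epsilon})$. The common prefactor $1/\sqrt{4\pi\tau(t)\epsilon}$ cancels in the ratios appearing in (3.12), which is what makes the limits computable in closed form.

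\textbf{Case $\alpha>0$.} Since $\tau(t)\to 1/\alpha$, the Gaussian kernel remains non-degenerate. For $x$ in a fixed compact set and $\tau(t)$ in a neighborhood of $1/\alpha$, the integrands in (3.9) are uniformly dominated by $e^{\|U_0\|_\infty/(2\epsilon)}$ times a single integrable Gaussian in $y$ (using $u_0\in L^1\cap L^\infty$ to bound $U_0$). Dominated convergence gives $\sqrt{4\pi\tau(t)\epsilon}\,\tilde{S}^{\epsilon}(x,\tau(t))\to G(x)$ and $\sqrt{4\pi\tau(t)\epsilon}\,\tilde{C}^{\epsilon}(x,\tau(t))\to H(x)$, uniformly on compact $x$-sets. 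The $x$-differentiated integrand picks up only the polynomial factor $-(x-y)/(2\tau\epsilon)$, still absorbed by the Gaussian, so the $x$-derivatives converge to $G'(x)$ and $H'(x)$. Substituting into (3.12), the damping factor $e^{-\alpha t}$ in $u^{\epsilon}$ is cancelled exactly by the normalization $e^{\alpha t}$, producing the first identity of (3.18); the formula for $v^{\epsilon}$ follows from the quotient rule.

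\textbf{Case $\alpha<0$.} Here $\tau(t)\to\infty$ and the Gaussian kernel spreads out, so the preceding argument fails in the original $x$-variable. The plan is to pass to the self-similar variable $\xi=x/\sqrt{2\epsilon\tau(t)}$ via the substitution $y=\sqrt{2\epsilon\tau(t)}\,\eta$ in (3.9). The quadratic part reduces to $(x-y)^2/(4\tau\epsilon)=(\xi-\eta)^2/2$, and a direct computation yields
\[
\tilde{S}^{\epsilon}=\frac{1}{\sqrt{2\pi}}\int_{\mathbb{R}} e^{-(\xi-\eta)^2/2}\,e^{-U_0(\sqrt{2\epsilon\tau}\,\eta)/(2\epsilon)}\,d\eta,
\]
and similarly for $\tilde{C}^{\epsilon}$ with an extra factor $V_0(\sqrt{2\epsilon\tau}\,\eta)$. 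Because $u_0,v_0\in L^1(\mathbb{R})$, the antiderivatives $U_0$ and $V_0$ are bounded on $\mathbb{R}$ and admit finite limits $U_0(\pm\infty)$ and $V_0(\pm\infty)$. For each fixed $\eta\neq 0$ one has $U_0(\sqrt{2\epsilon\tau}\,\eta)\to U_0(\mathrm{sgn}(\eta)\cdot\infty)$ as $\tau\to\infty$, and dominated convergence, with uniform majorant $e^{\|U_0\|_\infty/(2\epsilon)}\,e^{-(\xi-\eta)^2/2}$, gives the limit of the integral. Splitting the limit at $\eta=0$ and changing variable $w=\xi-\eta$ in each half converts the pieces into the complementary Gaussian tails $\int_{-\infty}^\xi e^{-w^2/2}\,dw$ and $\int_\xi^\infty e^{-w^2/2}\,dw$, identifying the limit with a multiple of $G_\infty(\xi)$; the same step for $\tilde{C}^{\epsilon}$ produces $H_\infty(\xi)$. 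Since $\partial_x=(2\epsilon\tau)^{-1/2}\partial_\xi$, formulas (3.12) acquire exactly the prefactors $-\sqrt{\tau(t)}/\sqrt{2\epsilon}$ and $\sqrt{2\epsilon\tau(t)}$ needed to extract finite nonzero limits, yielding (3.19).

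The main technical obstacle is the justification of differentiation under the integral sign after the self-similar rescaling in Case~2: one must verify that the $\xi$-derivative of the rescaled integrand is still dominated by a single integrable function of $\eta$, uniformly for $\xi$ in any compact set. This holds because differentiation produces only an additional polynomial factor in $\xi-\eta$, still absorbed by $e^{-(\xi-\eta)^2/2}$. Uniformity of all the limits on compact $\xi$- (or $x$-) sets is then automatic from the uniform integrable majorants just described.
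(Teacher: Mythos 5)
Your proposal is correct and follows essentially the same route as the paper: in Case 1 pass to the limit $\tau(t)\to 1/\alpha$ directly in the Hopf--Cole integrals, and in Case 2 rescale to the self-similar variable $\xi=x/\sqrt{2\epsilon\tau(t)}$, identify the pointwise limit $U_0(\pm\infty)$, $V_0(\pm\infty)$ of the rescaled integrand, and recover $G_\infty$, $H_\infty$ as complementary Gaussian tails before substituting into the formulas for $(u^\epsilon,v^\epsilon)$. The only (cosmetic) difference is that where you invoke dominated convergence with a split at $\eta=0$, the paper follows Hopf's device of splitting the integral over $(-\infty,\xi-\delta)$, $[\xi-\delta,\xi+\delta]$, $(\xi+\delta,\infty)$ and letting $t\to\infty$ before $\delta\to 0$; both yield the same limit.
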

\begin{proof}
 If $\alpha >0$, from \eqref{e3.4}, we have $\tau  \rightarrow \frac{1}{\alpha}$ as $t\rightarrow \infty$. It follows  from the formula for $\tilde{S}^\ep$  and $\tilde{C}^\ep$ given by  \eqref{e3.12} and \eqref{e3.13} and from the formula \eqref{e3.16} for $G$ and $H(x)$, that
\begin{equation}
\begin{aligned}
\lim_{t\rightarrow \infty}\tilde{S}^{\ep}(x, \tau) &=  \tilde{S}^{\ep}(x,   \frac{1}{\alpha} ) = \frac{\alpha}{\sqrt{4 \pi {\ep}} } G(x) \\
\lim_{t\rightarrow \infty}\partial_x \tilde{S}^{\ep}(x, \tau) &=  \partial_x \tilde{S}^{\ep}(x,   \frac{1}{\alpha} ) = \frac{\alpha}{\sqrt{4 \pi {\ep}} } \frac{dG(x)}{dx}\\
\lim_{t \rightarrow \infty }\tilde{C}^{\ep}(x, \tau)& = \tilde{C}^{\ep}(x,   \frac{1}{\alpha} ) 
= \frac{\alpha}{\sqrt{4 \pi {\ep}} }H(x) \\
\lim_{t \rightarrow \infty }\partial_x \tilde{C}^{\ep}(x, \tau)& =\partial_x \tilde{C}^{\ep}(x,   \frac{1}{\alpha} ) 
= \frac{\alpha}{\sqrt{4 \pi {\ep}} }\frac{dH(x)}{dx} 
\end{aligned}
\label{e3.20}
\end{equation}
These limits are uniform with respect to $x$, where $x$ belongs to compact subsets of $\mathbb{R}$.
Substituing the asymptotic formula \eqref{e3.20} in the formula for $(u^\epsilon,v^\epsilon)$ given by \eqref{e3.12}-\eqref{e3.13}, we get \eqref{e3.18}.

Now, we consider the case when $\alpha<0$. We note that a typical term that we need to consider is of the form
\[
\Phi(x,t)=\frac{1}{ \sqrt{4 \pi \tau \ep }} \int_{\mathbb{R}} { \phi(y) \exp\{- \frac{\theta(x,y,\tau)}{2 \tau \ep}  \}  }dy
\]
with $\phi(y)=V_0(y)=\int_0^y v_0(s) ds$ or $\phi(y)=1$.  Setting $\xi = x/\sqrt{2 \epsilon  \tau(t)}$,  then making a change of
variable $z = \frac{\sqrt{2\epsilon \tau(t)} \xi - y}{\sqrt{2 \epsilon \tau(t)}}$ and
renaming $z$ as $y$, we get 
\begin{equation}
\Phi(x,t)=\frac{1}{ \sqrt{2 \pi  }} \int_{\mathbb{R}} { \phi(\sqrt{\epsilon \tau}(\xi-y) \exp\{- U_0(\sqrt{\epsilon \tau}(\xi-y)/\epsilon -y^2 \}  }dy.
\label{e3.21}
\end{equation}
To study the large time behavior of $(u^\epsilon,v^\epsilon)$, we follow Hopf \cite{Hopf}  by splitting the integral on the right hand side as the sum of integrals on three intervals , namely, $(-\infty,\xi-\delta), [\xi-\delta,\xi+\delta]$
and  $(\xi+\delta,\infty)$ where $\delta>0$ is small: 

\begin{equation*}
\begin{aligned}
\Phi(x,t)&=\frac{1}{ \sqrt{2 \pi  }} \int_{-\infty}^{\xi-\delta} { \phi(\sqrt{\epsilon \tau}(\xi-y) \exp\{- U_0(\sqrt{\epsilon \tau}(\xi-y)/\epsilon -y^2 \}  }dy\\
+&\frac{1}{ \sqrt{2 \pi  }} \int_{\xi-\delta}^{\xi+\delta}{ \phi(\sqrt{\epsilon \tau}(\xi-y) \exp\{- U_0(\sqrt{\epsilon \tau}(\xi-y)/\epsilon -y^2 \}  }dy\\
+&\frac{1}{ \sqrt{2 \pi  }} \int_{\xi+\delta}^\infty { \phi(\sqrt{\epsilon \tau}(\xi-y) \exp\{- U_0(\sqrt{\epsilon \tau}(\xi-y)/\epsilon -y^2 \}  }dy.
\end{aligned}
\end{equation*}

Using the assumptions of the theorem, the integral on $[\xi-\delta,\xi+\delta]=O(\delta)$. Now, passing  $t$ to  $\infty$ 
in the other two terms and then letting $\delta$ to zero, we arrive at
 
\begin{equation}
\begin{aligned} 
\lim_{t\rightarrow \infty}\Phi(x,t)
&= \frac{1}{ \sqrt{2 \pi  }} [
e^{-{U_0(+\infty)\over\epsilon}}\phi(\infty)\int_{-\infty}^{\xi }
e^{-y^2/2} dy+
e^{-{U_0(-\infty)\over\epsilon}}\phi(-\infty)\int_{\xi}^{\infty}
e^{-y^2/2} dy.]\\
\sqrt{2\epsilon \tau(t)}\Phi_x(x,t)&= \frac{1}{ \sqrt{2 \pi  }} \frac{d}{d\xi} [
e^{-{U_0(+\infty)\over\epsilon}}\phi(\infty)\int_{-\infty}^{\xi }
e^{-y^2/2} dy+
e^{-{U_0(-\infty)\over\epsilon}}\phi(-\infty)\int_{\xi}^{\infty}
e^{-y^2/2} dy.]
\end{aligned}
\label{3.22}
\end{equation}
We observe that, these limits are uniform for $\xi$, belonging to compact subsets of $\mathbb{R}$. Using \eqref{3.22} with  $\phi=1$ and $\phi(y)=\int_0^yv_0(z)dz$  in the formula \eqref{e3.12}, we arrive at
\begin{equation}
\begin{aligned}
\lim_{t\rightarrow \infty}\tilde{S}^{\ep}(x, \tau(t)) & = \frac{1}{\sqrt{2 \pi } } G(\xi) \\
\lim_{t\rightarrow \infty}\sqrt{2 \epsilon \tau(t)} . \partial_x \tilde{S}^{\ep}(x, \tau(t)) &= \frac{1}{\sqrt{2 \pi } } \frac{dG(\xi)}{d\xi}\\
\lim_{t \rightarrow \infty }\tilde{C}^{\ep}(x, \tau(t))&= \frac{1}{\sqrt{2 \pi} }H(x) \\
\lim_{t \rightarrow \infty }\sqrt{2 \epsilon \tau(t)}. \partial_x \tilde{C}^{\ep}(x, \tau(t))& = \frac{1}{\sqrt{2 \pi} }\frac{dH(x)}{dx}. 
\end{aligned}
\label{e3.23}
\end{equation}
Using \eqref{e3.12} in the formula for $(u^\epsilon, v^\epsilon)$ given by \eqref{e3.1}-\eqref{e3.3}, we get (3.19).
This completes the proof of the theorem.
\end{proof}

\section {\bf Weak  solution to  the Cauchy problem \eqref{e1.1}-\eqref{e1.2}.}

In this section, we work on the Cauchy problem 

\begin{equation}
 \begin{aligned}
&\frac{\partial u}{ \partial t} + \frac{\partial }{ \partial x} ( \frac{u^2}{2} ) + \a u = 0, \\
&\frac{\partial v}{ \partial t} + \frac{\partial }{ \partial x} (u v) =  0,
\end{aligned}
\label{e4.1}
\end{equation}
in the space - time domain $\{(x,t) : x \in \mathbb{R}, t>0\}$ with initial conditions
\begin{equation}
 \begin{aligned}
u(x, 0)= u_0(x), \,\,\, v(x, 0)= v_0(x), x \in \mathbb{R}.
\end{aligned}
\label{e4.2}
\end{equation}
With specific assumptions on the initial data, we construct the weak solution based on weak asymptotics, the Volpert product and Colombeau algebra. We use the approximate solution constructed in previous section in the construction.

\subsection{Weak asymptotic solution}
Before stating the results, we recall some notations introduced in previous section which appear in the analysis of solution.
Fix $(x, t)$ for  $x \in \mathbb{R}, t>0$ and given $u_0 \in L^{\infty}(\mathbb{R})$,  set 
\begin{equation}
 \begin{aligned}
\theta(x, y, \tau) = \frac{{(x-y)}^2}{ 2\tau(t)} + \int_0^y{ u_0(s) ds} , \,\,\,\tau(t)=\frac{1-e^{- \a t}}{\alpha}. 
\end{aligned}
\label{e4.3}
\end{equation}
With these notations, we state the main existence results of this paper.
 \begin{theorem}
 Let $u_0 \in L^\infty( \mathbb{R})$ and $v_0$  is in $L^1( \mathbb{R})$.  
\begin{itemize}
\item Let $u_0^\epsilon(x)=u_0*\eta^\epsilon(x), v_0^\epsilon(x)=v_0*\eta^\epsilon(x)$, where $\eta^\epsilon$ is the standard Friedrichs mollifier and  $(\tilde{u}^\ep, \tilde{V}^\ep)$ be defined by \eqref{e3.4}  with initial data $(u_0,v_0)$ replaced by $(u_0^\epsilon,v_0^\epsilon)$ . Then  with $\tilde{v}^\ep=\partial_x \tilde{V}^\ep$, $(\tilde{u}^\ep,\tilde{v}^\ep)$ is a weak asymptotic solution to \eqref{e1.1} with initial data \eqref{e1.2}.
\end{itemize}
\begin{itemize}
 \item For each fixed $t>0$, $\tau=\tau(t)>0$, except for a countable number of points $x \in  \mathbb{R}$, there exist a unique minimizer for the minimization problem
\begin{equation}
\min_{-\infty < y <\infty} \theta(x,y,\tau).
\label{e4.4}
\end{equation}
We call this minimizer as $y(x,\tau)=y(x,\tau(t))$, then at such points $(x, t),$ the  limits 
 $V^{\ep}(x, t) \rightarrow V(x, t)$ ,  $u^{\ep}(x, t) \rightarrow u(x, t)$  as $\ep \rightarrow 0$  exist and $(u, v)$ is given by 
 \begin{equation}
\begin{aligned}
 u(x, t) =  e^{- \a t} \frac{ (x - y(x, \tau(t)))}{ \tau(t)}  , \,\,\, V(x, t)= \int_0^{y(x,\tau(t))}  v_0(s) ds.
\end{aligned}
\label{e4.5}
\end{equation}
 $V(x,t)$  is a function of bounded variation, $v^{\ep} \rightarrow v(x,t)=\partial_x V(x,t)=v_0(y(x,t) \partial_x y(x,t)$
in distribution  and $(u,v)$ is a generalized solution for  \eqref{e1.1} with initial condition \eqref{e1.2}. 
\end{itemize}
\end{theorem}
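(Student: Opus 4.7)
By Theorem 3.1 applied with mollified data $(u_0^\ep, v_0^\ep)$, the pair $(\tilde u^\ep, \tilde v^\ep)$ is a classical $C^\infty$ solution of the viscous system \eqref{e3.1}. Hence, for any $\phi \in C_c^\infty(\mathbb{R})$, integration by parts in the viscous term yields
\begin{align*}
\int_\mathbb{R}\bigl(\tilde u^\ep_t + \tilde u^\ep \tilde u^\ep_x + \alpha \tilde u^\ep\bigr)\phi\,dx &= \ep e^{-\alpha t}\int_\mathbb{R} \tilde u^\ep \,\phi_{xx}\,dx, \\
\int_\mathbb{R}\bigl(\tilde v^\ep_t + (\tilde u^\ep \tilde v^\ep)_x\bigr)\phi\,dx &= \ep e^{-\alpha t}\int_\mathbb{R} \tilde v^\ep \,\phi_{xx}\,dx.
\end{align*}
A maximum principle (or the explicit Hopf--Cole formula) gives $\|\tilde u^\ep(\cdot,t)\|_{L^\infty} \leq \|u_0\|_{L^\infty}$, and an $L^1$ estimate for the linear $\tilde v^\ep$ equation yields $\|\tilde v^\ep(\cdot,t)\|_{L^1} \leq \|v_0\|_{L^1}$, both uniform in $\ep$ and $t\in[0,T]$. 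Combined with $e^{-\alpha t}$ bounded on $[0,T]$, each remainder is $O(\ep)$, establishing \eqref{e2.1}. The initial condition \eqref{e2.2} is immediate since $\tilde u^\ep(\cdot,0) = u_0^\ep \to u_0$ and $\tilde v^\ep(\cdot,0) = v_0^\ep \to v_0$ in the sense of distributions.

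\textbf{Part (ii), unique minimizer.} Fix $t>0$, so $\tau = \tau(t) > 0$. Since $|U_0(y)| \leq \|u_0\|_{L^\infty}|y|$, the map $y \mapsto \theta(x,y,\tau)$ is continuous with quadratic growth, so its set $M(x,\tau)$ of minimizers is nonempty and compact. A standard Lax--Oleinik monotonicity argument shows that $y^+(x,\tau) := \max M(x,\tau)$ is nondecreasing in $x$, hence has at most countably many discontinuity points; at every continuity point $y^+$ coincides with $\min M(x,\tau)$, so the minimizer is unique there.

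\textbf{Laplace's method and identification of the limit.} At such an $x$, setting $y = y(x,\tau)$, split each integral in \eqref{e3.4} over $(-\infty, y-\delta]\cup[y-\delta, y+\delta]\cup[y+\delta,\infty)$ with $\delta>0$ small. Outside the central interval, uniqueness of the minimizer gives $\theta(x,\cdot,\tau) \geq \theta(x,y,\tau)+c(\delta)$, making those pieces exponentially smaller than the central one as $\ep\to 0$. Inside, the rescaling $z=(x-y')/\sqrt{2\ep\tau}$ converts the central integral to a Gaussian; continuity of $y'\mapsto V_0(y')$ at $y$ and $\delta\to 0$ then yield
\begin{align*}
\tilde u^\ep(x,t) &\longrightarrow e^{-\alpha t}\,\frac{x - y(x,\tau)}{\tau(t)} =: u(x,t), \\
\tilde V^\ep(x,t) &\longrightarrow \int_0^{y(x,\tau)} v_0(s)\,ds =: V(x,t).
\end{align*}
Since $y(\cdot,\tau)$ is nondecreasing and $V_0$ is bounded, $V(\cdot,t)$ is of bounded variation; uniform local boundedness and pointwise a.e.\ convergence of $\tilde V^\ep$ give $\tilde V^\ep \to V$ in $L^1_{\rm loc}$ and, by differentiation, $\tilde v^\ep \to \partial_x V =: v$ in the sense of distributions. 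Together with Part (i), Definition 2.2 identifies $(u,v)$ as a generalized solution of \eqref{e1.1}--\eqref{e1.2}.

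\textbf{Main obstacle.} The delicate estimate is the uniform $L^1$-bound on $\tilde v^\ep$ used in Part (i), since the $v$-equation is linear but with a non-smooth coefficient in the limit. I would handle it by decomposing $v_0 = v_0^+ - v_0^-$, solving the linear parabolic $v$-equation with coefficient $\tilde u^\ep$ separately for each sign (parabolic maximum principle preserves nonnegativity), and invoking mass conservation $\frac{d}{dt}\int \tilde v^\ep_{\pm}\,dx = 0$ for each piece, with decay at infinity justified by the explicit Hopf-type representation. The Laplace-method passage to the limit for $\tilde C^\ep$ is then controlled by the pointwise bound $|V_0(y)|\leq \|v_0\|_{L^1}$, which dominates the $\tilde C^\ep$ integrand by a constant multiple of the $\tilde S^\ep$ integrand and lets the same splitting-plus-exponential-decay argument apply verbatim.
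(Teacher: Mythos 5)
Your proposal is essentially correct, and Part (ii) follows the paper's route closely (monotonicity of the minimizer in $x$ giving uniqueness off a countable set, Laplace's method for the quotient of integrals -- the paper outsources this to a lemma in Evans rather than splitting the integral by hand, and passes from uniform boundedness plus pointwise a.e.\ convergence of $\tilde V^\ep$ to distributional convergence of $\tilde v^\ep$ exactly as you do). Where you genuinely diverge is the viscous remainder in the $v$-equation in Part (i). You integrate by parts twice, leaving $\ep e^{-\alpha t}\int \tilde v^\ep \phi_{xx}\,dx$, which forces you to establish a uniform $L^1$ bound on $\tilde v^\ep$ via sign decomposition, positivity preservation, and mass conservation -- a workable but nontrivial detour, and you correctly flag it as the delicate step. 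The paper sidesteps this entirely: since $\tilde v^\ep=\partial_x\tilde V^\ep$, it integrates by parts a third time to get $-\ep e^{-\alpha t}\int \tilde V^\ep\phi_{xxx}\,dx$ and then uses only the bound $\|\tilde V^\ep\|_{L^\infty}\le\|v_0\|_{L^1}$, which is immediate from the explicit formula \eqref{e3.4} because $\tilde V^\ep$ is a weighted average of $V_0(y)=\int_0^y v_0$. Your approach buys nothing extra here and costs a parabolic comparison argument plus a justification of decay at infinity; the paper's trick is the cheaper one and is worth adopting.

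One small inaccuracy: you justify that $V(\cdot,t)=V_0(y(\cdot,\tau))$ is of bounded variation from ``$y$ nondecreasing and $V_0$ bounded.'' Boundedness of $V_0$ is not enough (a bounded oscillatory outer function composed with a monotone map need not be BV); the correct reason is that $V_0$ is absolutely continuous with total variation $\|v_0\|_{L^1}$, so precomposition with a monotone function cannot increase its variation. This is easily repaired and does not affect the rest of the argument.
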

\begin{proof}
 First, we show that $(\tilde{u}^\epsilon,\tilde{v}^\epsilon)$ is a weak asymptotic solution to \eqref{e1.1}-\eqref{e1.2}. The pair of functions 
$(\tilde{u}^\epsilon,\tilde{v}^\epsilon)$ belongs to $C^\infty ( \mathbb{R} \times (0,\infty)$, which follows easily by Leibniz rule. Since $u^\epsilon=e^{-\alpha t} \hat{U}_x$, to get  the estimate for $u^\epsilon$, it is enough to get  the estimate for $\hat{u}=\hat{U}_x$. From \eqref{e3.5}, $\hat{u}$ solves the following initial value problem
\[
\frac{\partial   \hat{u}}{ \partial t} + \frac{e^{-\a t}}{2}(\frac{\partial  \hat{u}^2}{ \partial x } )  =  {\ep} e^{- \a t}  \frac{{\partial}^2  \hat{u}}{ \partial x^2},
\]
with initial conditions 
\[
\hat{u}(x,0)=u_0*\eta_\epsilon(x).
\]
 Applying the maximum principle to this differential equation for $\hat{u}$,  we get
\[
||\hat{u}||_{L^\infty( \mathbb{R}\times[0, \infty))}\leq ||u_0^\epsilon||_{L^\infty( \mathbb{R})}\leq ||u_0||_{L^\infty( \mathbb{R})}.
\]
Since $\tilde{u}^\epsilon=e^{-\alpha t} \hat{u}$,  for any $T>0$
\[
||\tilde{u}^\epsilon||_{L^\infty( \mathbb{R} \times [0,T))} \leq e^{|\alpha | T} ||\hat{u}||_{L^\infty( \mathbb{R}\times[0,\infty))} \leq e^{|\alpha| T}
||u_0^\epsilon||_{L^\infty( \mathbb{R})} \leq e^{|\alpha|T}||u_0||_{L^\infty( \mathbb{R})}.
\]
Also, from the formula for $\tilde{V}^\epsilon$ 
\[
||\tilde{V}^\epsilon||_{L^\infty( \mathbb{R} \times [0,\infty)}\leq ||V_0^\epsilon||_{L^\infty( \mathbb{R})}\leq ||\int_0^x v_0^\epsilon dx||_{L^\infty( \mathbb{R})}\leq ||v_0||_{L^1( \mathbb{R})}.
\]
Now consider any test function $\psi \in C_c^\infty(\mathbb{R})$ and the following integrals,
\begin{equation}
 \begin{aligned}
&\int_ \mathbb{R} (\frac{\partial \tilde{u}^\epsilon}{ \partial t} + \frac{\partial }{ \partial x} \frac{( \tilde{u}^\epsilon)^2}{2}  + \a \tilde{u}^\epsilon)\psi dx = \int_{ \mathbb{R}} {\ep} e^{- \a t}  \frac{{\partial}^2 \tilde{u}^\epsilon}{ \partial x^2} \psi(x) dx=\int_{ \mathbb{R}} {\ep} e^{- \a t}   \frac{{\partial}^2 \psi}{ \partial x^2} \tilde{u}^\epsilon(x,t) dx, \\
&\int_{ \mathbb{R}}(\frac{\partial \bar{v}^\epsilon}{ \partial t} + \frac{\partial }{ \partial x} (\tilde{u}^\epsilon \tilde{v}^\epsilon))\psi(x) dx =  \int_{ \mathbb{R}}{\ep} e^{- \a t}  \frac{{\partial}^2 \tilde{v}^\epsilon}{ \partial x^2}\psi(x)dx =-\int_{ \mathbb{R}}{\ep} e^{- \a t}  \frac{{\partial}^3 \psi(x)}{ \partial x^3} \bar{V}^\epsilon(x,t)dx.
\end{aligned}
\label{e4.6}
\end{equation}
Clearly, the  terms on the  extreme right of \eqref{e4.6} goes to zero as $\epsilon \rightarrow 0$ unifomly in $t \in [0,T]$, for every $T>0$. Also by the property of the convolution
\[
u_0^\epsilon -u_0 \rightarrow 0,\,\,v_0^\epsilon -v_0 \rightarrow 0
\]
in distribution as $\epsilon \rightarrow 0.$ This proves that $(\bar{u}^\epsilon,\bar{v}^\epsilon)$ is a weak asymptotic solution of \eqref{e1.1} and \eqref{e1.2}.

Next, we proceed to prove that $u^{\ep} \rightarrow u$ and $V^{\ep} \rightarrow V$ pointwise a.e as $\ep \rightarrow 0$. For this, we follow Hopf \cite{Hopf}. The main step is the following observation of Hopf, which follows
basically from Jensen's inequality.\\
 
For every $(x,t)$ with $x\in \mathbb{R}, t>0$, the minimizer in \eqref{e4.4} exists. It is possible to have several such minimizers $y(x,\tau(t))$ and $y(x_1,\tau(t))\leq y(x_2,\tau(t))$ for $x_1< x_2$ . Further, for a.e $(x,t)$ these minimizers are unique. More precisely, fixing $\tau>0,$ Hopf \cite{Hopf} proved that, there exists  a unique minimum for $y,$  say $y(x, \tau)$ except for a countable $x,$ so that, 
\begin{equation*}
 \begin{aligned}
\theta(x, y_0(x, \tau), \tau)  = \min_{y \in \mathbb{R}}{\theta(x, y, \tau)} 
\end{aligned}
\end{equation*} 
where $\tau= \tau(t)$.
At the point $(x,t)$ where the unique minimum $y(x, \tau)$ in \eqref{e4.4} exists, we use the following theorem in Evans \cite{Evans1}: \\
{ \it
Suppose functions $k, l:  \mathbb{R} \rightarrow   \mathbb{R}$ are continuous functions, such that $l$ grows atmost linearly and $k$ grows at least quadratically. Suppose,  there exists  a unique point $y_0$ such that, 
$k(y_0)=\min_{y \in  \mathbb{R}}{k(y)}$, then we have that, \\
\begin{equation}
lim_{\ep \rightarrow 0}{     \frac{  \int_{-\i}^{\i}{ l(y) \exp{ \frac{- k(y)}{\ep} } dy}     }{      \int_{-\i}^{\i}{  \exp{ \frac{- k(y)}{\ep} } dy}  } }=l(y_0).
\label{e4.7}
\end{equation}
}
A direct application of this result with $l(y)=\int_0^y v_0(s)ds$ and $k(y)=\theta(x,y,\tau)$ with fixed $(x,t)$ where there is a unique minimizer for $\theta(x,.,\tau)$, gives 
\[
\lim_{\epsilon \rightarrow 0} V^\epsilon(x,t)=V(x,t)=\int_0^{y(x,\tau(t))}v_0(s)ds.
\]
In order to prove convergence of $v^\ep$ in distribution to $\partial_x V$,  first we show that $V^\ep$ is bounded, independent of $(x,t)$. Then use the fact that, a sequence of functions uniformly bounded and  pointwise convergent a.e., converges  in distribution and so does also all its derivatives. Now to prove uniform boundedness, we use the fact that $v_0$ is an integrable function as follows:

\begin{equation*}
 \begin{aligned}
&|V^\ep(x,\tau)| =  \frac{    |  \int_ \mathbb{R}{ \exp\{ \frac{-1}{2 \ep} \theta(x, y, \tau)  \}  (\int_0^y{ v_0(s) ds})   dy} | }{  | \int_ \mathbb{R}{ \exp\{ \frac{-1}{2 \ep} \theta(x, y, \tau) \}   dy} |     }\\
& \leq  (\int_{- \i}^{\i}{ | v_0(s) | ds})   \frac{    |  \int_ \mathbb{R}{ \exp\{ \frac{-1}{2 \ep} \theta(x, y, \tau)  \}     dy} | }{  | \int_ \mathbb{R}{ \exp\{ \frac{-1}{2 \ep} \theta(x, y, \tau) \}   dy} |     }\\
& \leq  (\int_{- \i}^{\i}{ | v_0(s) | ds})
\end{aligned}
\end{equation*} 
Thus, as $ \ep \rightarrow 0$ we obtain, 
\begin{equation*}
\begin{aligned}
v^{\ep}= \frac{ \partial  }{ \partial x} V(x,\tau) \rightarrow   \frac{ \partial  }{ \partial x} (\int_0^{y(x, \tau)}{v_0(s) ds}) 
\end{aligned}
\end{equation*} 
 in distribution. Now, since $v_0$ is a  function of bounded variation  in $x$ and $y_0(x, \tau)$ is a function of bounded variation in $x$ for each $t>0$, it follows that $\int_0^{y_0(x, \tau)}{v_0(s) ds}$ is a function of bounded variation  in $x$. Applying the chain rule for functions of bounded variation, we have 
\begin{equation*}
\begin{aligned}
 \frac{ \partial  }{ \partial x} (\int_0^{y(x, \tau)}{v_0(s) ds}) =v_0(y(x,  \tau))  \frac{ \partial y(x,\tau)}{ \partial x}.
\end{aligned}
\end{equation*} 
Now, using  $\tau(t) = \frac{1- e^{-\a t}}{ \a }$ and setting $y(x,t)=y_0(x,\tau(t))$, we get  as $\ep \rightarrow 0$,
\begin{equation*}
\begin{aligned}
v^{\ep} \rightarrow v_0(y(x, t))  \frac{ \partial y(x,t) }{ \partial x}
\end{aligned}
\end{equation*}  
 in distribution. The proof of the convergence of $v^{\epsilon}$ as $\ep \rightarrow 0$ is completed. 

Coming to  $u^{\epsilon},$ we see that 
\begin{equation*}
\begin{aligned}
u^\epsilon(x,t)= - 2 \ep e^{-\alpha t} \frac{  \frac{ \partial \tilde{S}^{\ep}}{\partial x} }{ \tilde{S}^{\ep} }(x, \tau) = 
 e^{-\alpha t} \frac{  \int_ \mathbb{R} \frac{(x-y)}{\tau}{ \exp\{- \frac{\theta(x,y,\tau)}{2 \ep}  \} } dy  }{   \int_ \mathbb{R} { \exp\{- \frac{\theta(x,y,\tau)}{2 \ep}  \} } dy  }.
\end{aligned}
\end{equation*}

By a direct application of \eqref{e4.7} to this formula for $u^\epsilon$, we have 
\begin{equation*}
\begin{aligned}
\lim_{\ep \rightarrow 0}u^\epsilon(x,t)=  e^{-\alpha t} \frac{(x-y(x, \tau))}{\tau}
\end{aligned}
\end{equation*}
where $y(x, \tau)$ is such that $\theta(x, y, \tau) = \min_{y \in R}{ \theta(x, y, \tau) }$. Since $(u,v)$ is the distributional limit of the weakly asymptotic solution, by definition it is a generalized solution of the inviscid problem \eqref{e1.1} and \eqref{e1.2}.
\end{proof}
\subsection{Volpert product and distributional solution}
Suppose the initial data is more regular, then the generalized solution constructed in previous section is a distributional solution. 
\begin{theorem}
Let $u_0 \in L^\infty(R)$ and $v_0$ is a function of bounded variation in $R$. 
\begin{itemize}
\item  Then the vanishing viscosity limit  $(u,v)$  given in the previous theorem is a distributional solution of \eqref{e1.1}.
\item Further the velocity $u$ satisfies the entropy condition $u(x+,t) \leq u(x-,t)$.
\end{itemize}
\end{theorem}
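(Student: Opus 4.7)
My plan is to pass to the $\varepsilon\to 0$ limit in the regularized system \eqref{e3.1}, whose classical solution $(u^\varepsilon, v^\varepsilon)$ is given explicitly by Theorem 3.1 and whose a.e.\ limit is already identified with $(u,v)$ in Theorem 4.1, and to show that the two equations of Definition 2.4 are recovered in the limit. For the first equation I would use the maximum principle applied to $\hat u = \hat U_x$ in \eqref{e3.5} to get the uniform bound $\|u^\varepsilon\|_{L^\infty(\mathbb{R}\times[0,T])} \le e^{|\alpha|T}\|u_0\|_\infty$; then testing the first equation of \eqref{e3.1} against $\phi\in C_c^\infty(\mathbb{R}\times(0,\infty))$ and integrating by parts, dominated convergence handles the nonlinear flux $(u^\varepsilon)^2/2$, the viscous remainder $\int\varepsilon e^{-\alpha t}u^\varepsilon\phi_{xx}$ vanishes as $\varepsilon\to 0$, and one recovers the first equation of Definition 2.4 in the distributional sense.

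The second equation is the delicate point. I will work with the integrated quantity $V^\varepsilon$, which by \eqref{e3.5} satisfies $V^\varepsilon_t + u^\varepsilon V^\varepsilon_x = \varepsilon e^{-\alpha t}V^\varepsilon_{xx}$. Testing with $\phi_x$ for $\phi\in C_c^\infty$ and integrating by parts gives
\begin{equation*}
\int v^\varepsilon\phi_t\,dx\,dt + \int u^\varepsilon V^\varepsilon_x\,\phi_x\,dx\,dt = \int \varepsilon e^{-\alpha t}V^\varepsilon\,\phi_{xxx}\,dx\,dt;
\end{equation*}
the uniform bound $\|V^\varepsilon\|_{L^\infty}\le\|v_0\|_{L^1}$ sends the right-hand side to zero and weak convergence $v^\varepsilon\to v$ deals with the first term, so everything reduces to proving
\begin{equation*}
\int u^\varepsilon V^\varepsilon_x\,\phi_x\,dx\,dt \;\longrightarrow\; \int \overline{u}\,V_x\,\phi_x\,dx\,dt,
\end{equation*}
i.e.\ that the viscous product converges to the Volpert product of Theorem 2.1. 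This is the main technical obstacle. I will attack it by decomposing $\mathbb{R}\times(0,\infty)=S_c\cup S_j\cup S_0$ with respect to $(u,V)$. On compacts inside $S_c$ the minimizer $y(\cdot,\tau)$ of Theorem 4.1 is unique and locally smooth, so the Laplace-type estimate behind \eqref{e4.7} upgrades the a.e.\ convergence to local uniform convergence of both $u^\varepsilon$ and $V^\varepsilon$, and the product passes to the limit $uV_x=\overline{u}V_x$ there. The jump set $S_j$ is a locally finite union of Lipschitz shock curves $x=s(t)$ satisfying the Rankine--Hugoniot condition $\dot s=(u_-+u_+)/2$, and a stretched-coordinate analysis of the Hopf--Cole formulas for $\tilde S^\varepsilon,\tilde C^\varepsilon$ across such a curve should produce the thin-layer identity
\begin{equation*}
\lim_{\varepsilon\to 0}\int_{s(t)-\delta}^{s(t)+\delta} u^\varepsilon V^\varepsilon_x\,dx = (V_+-V_-)\int_0^1\!\bigl((1-\sigma)u_-+\sigma u_+\bigr)\,d\sigma = \overline{u}\,(V_+-V_-),
\end{equation*}
which matches exactly the jump contribution to the Volpert product in Theorem 2.1. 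Assembling the continuous and jump contributions (together with the Hausdorff-measure-zero set $S_0$, which contributes nothing) yields the required global limit.

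For the entropy inequality, the formula $u(x,t)=e^{-\alpha t}(x-y(x,\tau(t)))/\tau(t)$ of Theorem 4.1 reduces the claim to the monotonicity of $x\mapsto y(x,\tau)$. A standard cyclic monotonicity argument, as in Hopf \cite{Hopf}, applied to the minimization of $\theta(x,\cdot,\tau)$ shows that this map is non-decreasing, so at any jump point $x_0$ of $u(\cdot,t)$ one has $y(x_0+,\tau)\ge y(x_0-,\tau)$, whence
\begin{equation*}
u(x_0+,t)-u(x_0-,t)=\frac{e^{-\alpha t}}{\tau(t)}\bigl(y(x_0-,\tau(t))-y(x_0+,\tau(t))\bigr)\le 0,
\end{equation*}
which is the Lax entropy condition for $u$.
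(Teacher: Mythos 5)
Your route is genuinely different from the paper's. The paper never passes to the limit in the nonconservative product $u^\varepsilon V^\varepsilon_x$: it takes the explicit limit formulas $u=e^{-\alpha t}(x-y(x,\tau))/\tau$, $V=V_0(y(x,\tau))$ from Theorem 4.1 and verifies $V_t+\overline{u}V_x=0$ \emph{directly} as a measure, separately on $S_c$ (a chain-rule computation reducing to the transport identity $\partial_\tau y+\frac{x-y}{\tau}\partial_x y=0$, which follows from the already-established equation for $u$) and on $S_j$ (where the atom of the Volpert product is $\frac{u_-+u_+}{2}(V_+-V_-)$ and cancels against $-\dot s\,(V_+-V_-)$ by Rankine--Hugoniot); differentiating in $x$ then gives $v_t+(\overline{u}v)_x=0$. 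You instead test the viscous equation for $V^\varepsilon$ and reduce everything to the convergence of $\int u^\varepsilon V^\varepsilon_x\phi_x$ to the Volpert product. Your approach, if completed, proves strictly more --- it identifies the vanishing-viscosity product with the Volpert product, which the paper's argument only shows to be \emph{a} consistent choice --- but it is correspondingly harder, and it also quietly assumes a structure theorem for $S_j$ (a locally finite union of Lipschitz shock curves) that the paper's measure-theoretic Volpert decomposition does not require.

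The one point I would press you on is the thin-layer identity, which you introduce with ``should produce.'' This is the entire content of the second bullet and it is exactly where the argument could fail: for a generic regularization the limit of a nonconservative product depends on the internal shock profile and need not be the straight-line (Volpert) average. Here it does work, but for a specific reason you have not stated: writing $w=\exp\{-(\theta(x,y_+,\tau)-\theta(x,y_-,\tau))/2\varepsilon\}$, the Laplace expansion of the Hopf--Cole formulas gives, inside the layer,
\begin{equation*}
u^\varepsilon\approx\frac{u_-+u_+w}{1+w},\qquad V^\varepsilon\approx\frac{V_-+V_+w}{1+w},
\end{equation*}
so both profiles are M\"obius functions of the \emph{same} layer variable, $dV^\varepsilon=\frac{(V_+-V_-)\,dw}{(1+w)^2}$, and
\begin{equation*}
\int_0^\infty\frac{u_-+u_+w}{(1+w)^3}\,(V_+-V_-)\,dw=\frac{u_-+u_+}{2}(V_+-V_-),
\end{equation*}
using $\int_0^\infty(1+w)^{-3}dw=\int_0^\infty w(1+w)^{-3}dw=\tfrac12$. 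Without this computation (or an equivalent one) your proof of the second equation is incomplete. Your treatment of the first equation and of the entropy inequality via monotonicity of $x\mapsto y(x,\tau)$ matches the paper and is fine.
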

\begin{proof}
 We prove that the limiting distributions $(u,v)$ solve the equation \eqref{e1.1} with the initial condition \eqref{e1.2}. 
First, we show $u$ solves the first equation in \eqref{e1.1} with initial condition $u(x,0)=u_0(x)$. For this, we need to show
for all $\phi \in C_c^1(\mathbb{R} \times[0,\infty))$,
\[
\begin{aligned}
\int_0^\infty \int_{-\infty}^\infty u(x,t) \phi_t(x,t)+\frac{(u(x,t)^2)}{2} \phi_x( x,t) & - \alpha u(x,t) \phi(x,t)  
 +\int_{-\infty}^\infty u_0(x)\phi(x,0) dx=0
\end{aligned} 
\]
This is a standard as demonstrated, by passing to the limit as $\epsilon$ tends to zero in the equation
\[
\begin{aligned}
\int_0^\infty \int_{-\infty}^\infty (u^\epsilon(x,t) \phi_t(x,t)+\frac{(u^{\epsilon}(x,t)^2)}{2} \phi_x( x,t) & - \alpha u^\epsilon(x,t) \phi(x,t) + {\ep} e^{- \a t}  u^\epsilon(x,t) \phi_{xx})dx dt\\
& +\int_{-\infty}^\infty u_0(x)\phi(x,0) dx=0.
\end{aligned} 
\]
 and an application of dominated convergence theorem.

The proof that $v$ satisfies the second equation in \eqref{e1.1},  follows  by  an argument as in LeFloch \cite{LeFloch}. 
For completeness, we give the details here. As in \cite{LeFloch, Joseph2, Joseph3}, first we  show
\begin{equation} 
\mu = V_t + \overline{u} V_x=0
\label{e4.8}
\end{equation}
as a measure. First, we consider $(x,t)\in S_c$ that is a point of approximate continuity of $(u,V)$ and compute the first derivatives of $u=e^{-\alpha t} (\frac{x-y(x,\tau(t)}{\tau(t)})$.  We have
\[
\begin{aligned}
u_t&=e^{-\alpha t}[-\alpha \frac{x-y(x,\tau(t))}{\tau(t)}-\frac{x-y(x,\tau(t))}{\tau(t)^2} e^{-\alpha t}-e^{-\alpha t}\frac{y_t(x,t)}{\tau(t)}]\\
u_x&=\frac{e^{-\alpha t}}{\tau(t)}[1-y_x(x,\tau(t)].
\end{aligned}
\]
Using these expressions in the equation for  $u$, namely
\[
u_t + (\frac{u^2}{2})_x+\alpha u=0,
\]
and writing $\tau$ instead of $\tau(t)$, we get
\[
e^{-2 \alpha t}(-\frac{(x-y(x,\tau))}{\tau^2}-\frac{\partial_{\tau}y(x,\tau)}{\tau}+
\frac{(x-y(x,\tau))}{\tau}\frac{(1-\partial_{x}y(x,\tau))}{\tau})=0.
\]
Simplifying this equation we have,
\begin{equation}
\partial_{\tau}y(x,\tau) + \frac{(x-y(x,\tau))}{\tau} \partial_{x}y(x,\tau)=0.
\label{e4.9}
\end{equation}
Since $V(x,t)=\int_0^{y(x,\tau)} v_0(z)dz$ an easy calculation gives,
\[
\partial_{t}V(x,t) + u \partial_{x}V(x,t)=(\frac{dv_0}{dx})(y(x,\tau) e^{-\alpha t}
\{\partial_{\tau}y(x,\tau) +  \frac{(x-y(x,\tau))}{\tau}\partial_{x}y(x,\tau)\}
\]
and in view of \eqref{e4.9}, we arrive at
\begin{equation}
\partial_{t}V + u \partial_{x}V=0,
\label{e4.10}
\end{equation}
 for all points $(x,t)\in S_c$.

Next, consider $(s(t),t) \in S_j$,  a point on the  curve of discontinuity $x=s(t)$ of $(u, V)$.  Note that, the Rankine Hugoniot  condition 
for the equation for $u$ gives 
\begin{equation}
\frac{ds}{dt}= \frac{u(s(t)+,t)+u(s(t)-,t)}{2}.
\label{e4.11}
\end{equation}
We consider the measure $\mu=\partial_{t}V(x,t) +\bar{ u} \partial_{x}V(x,t)$ of $\{(s(t),t)\}$ for  $(s(t),t)\in S_j$:
\begin{equation}
\begin{aligned}
\mu(\{s(t),t)\})&=-\frac{ds}{dt}( V(s(t)+,t)-V(s(t)-,t) )\\
&+\int_0^1(u(s(t)-,t)+\alpha (u(s(t)+,t)-u(s(t)-,t))d\alpha
(V(s(t)+,t)-V(s(t)-,t))\\
&=[-\frac{ds}{dt}+\frac{u(s(t)+,t)+u(s(t)-,t)}{2}](V(s(t)+,t)-V(s(t)-,t)).
\end{aligned}
\label{e4.12}
\end{equation}
Using \eqref{e4.11} in \eqref{e4.12}, we get 

\begin{equation}
\mu(\{s(t),t)\}=[\partial_{t}V +  \overline{u} \partial_{x}V](s(t),t)=0.
\label{e4.13}
\end{equation}
In view of \eqref{e4.10}  and \eqref{e4.13} we get
\[
\partial_{t}V +  \overline{u} \partial_{x}V=0,
\]
in the sense of measures, which is \eqref{e4.8}. Now, taking the distributional derivative of this equation with respect to $x$ and using $v=V_x$, we arrive at

\[
\partial_{t}v + \partial_{x}(\overline{u} v)=0.
\]

The assertion that $u$ satisfies the entropy condition, follows easily from the fact that $y(x,\tau(t))$ is a non-decreasing function of $x$ and hence $y(x-,\tau(t)) \leq y(x+,\tau(t))$.
This completes the  proof of the theorem.
\end{proof}
\subsection{Weak solution in the sense of Colombeau}

Consider $(u,v)$ where 
$u=(u^\epsilon(x,t))_{0<\epsilon <1}$,
 and 
$v=(v^\epsilon(x,t))_{0<\epsilon <1}$,   
with $u^\epsilon$ and $v^\epsilon$  as solutions of  
equation \eqref{e2.3}
\begin{equation}
\begin{aligned}
u^\epsilon_t + (\frac{{u^\epsilon}^2}{2})_x +\alpha u&=\frac{\epsilon}{2} 
u^\epsilon_{xx},\\
v^\epsilon_t + (u^\epsilon v^\epsilon)_x &=\frac{\epsilon}{2} 
v^\epsilon_{xx},
\end{aligned}
\label{e4.14}
\end{equation}
in $\{(x,t) : x \in R^1, t>0 \}$, supplemented with an 
initial condition at $t=0$
\begin{equation}
u^\epsilon(x,0) = u^\epsilon_{0}(x) , v^\epsilon(x,0) = 
v^\epsilon_{0}(x), 
\label{e4.15}
\end{equation}
where
$u_0=(u_0^\epsilon(x))_{0<\epsilon <1}$, 
$v_0=(v_0^\epsilon(x))_{0<\epsilon <1}$, 
are  in ${\mathcal G}(R^1)$, the 
algebra  of generalized functions of Colombeau. We assume that $u_0^\epsilon$
and $v_0^\epsilon$ are bounded $C^\infty$ functions of $x$  with the following
estimates for $j=0, 1, 2, \dots$
\begin{equation}
\begin{aligned}
||{\partial_x}^j u_0^\epsilon||_{L^\infty([0,\infty))}&= O(\epsilon^{-j})
||{\partial_x}^j v_0^\epsilon||_{L^\infty([0, \infty))}&= 
O(\epsilon^{-j})
\end{aligned}
\label{e4.16}
\end{equation}
and $u_0^\epsilon(x) \rightarrow  u_0(x)$, 
$v_0^\epsilon(x) \rightarrow  v_0(x)$
pointwise a.e. 
These conditions are satisfied, for example, if we take bounded 
measurable functions or integrable functions on $R^1$ and then take its convolution with
the Friedrichs mollifiers with scale $\epsilon$.
 We shall prove the following result.
\begin{theorem}
 Assume that $u_0= (u_0^\epsilon(x)_{0<\epsilon <1}$, 
$v_0=(\rho_0^\epsilon(x)_{0<\epsilon <1}$ are in 
${\mathcal G}(R^1)$, 
with the estimates \eqref{e4.16} as described before. Let
$(u^\epsilon, v^\epsilon)$ be given by the formula \eqref{e3.4}
with $(u_0(x),v_0(x))$ replaced by 
$(u_0^\epsilon(x),v_0^\epsilon(x))$ for $\epsilon>0$,
then $ u= (u^\epsilon)_{0<\epsilon <1}$ and
$ v= (v^\epsilon)_{0<\epsilon <1}$ are in ${\mathcal G}(\Omega)$
and $(u,v)$ is a solution to \eqref{e2.20} with initial condition
 $(u_0, v_0)=(u_0^\epsilon(x),v_0^\epsilon(x))_{0<\epsilon <1}$.
\end{theorem}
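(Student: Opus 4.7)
The plan is to establish two things: (i) the smooth families $(u^\epsilon, v^\epsilon)$ defined via \eqref{e3.4} with $(u_0^\epsilon, v_0^\epsilon)$ as initial data are moderate, so they represent elements of $\mathcal{G}(\Omega)$, and (ii) the residuals of \eqref{e2.20} tend to zero in the distributional sense as $\epsilon \to 0$, which is exactly the definition of association. The initial conditions match by construction, since $u^\epsilon(x,0)=u_0^\epsilon(x)$ and $v^\epsilon(x,0)=v_0^\epsilon(x)$.

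For moderateness I would work through the Hopf--Cole reduction of Theorem 3.1, which recasts $\tilde{S}^\epsilon$ and $\tilde{C}^\epsilon$ as solutions of the linear heat equation $\partial_\tau W = \epsilon\,\partial_x^2 W$ with initial data $\exp(-U_0^\epsilon/(2\epsilon))$ and $V_0^\epsilon\exp(-U_0^\epsilon/(2\epsilon))$, respectively, where $U_0^\epsilon, V_0^\epsilon$ are the primitives of $u_0^\epsilon, v_0^\epsilon$. The maximum principle applied to the first equation of \eqref{e3.1} yields
\begin{equation*}
\|u^\epsilon\|_{L^\infty(\mathbb{R}\times[0,T])} \leq e^{|\alpha|T}\|u_0^\epsilon\|_{L^\infty(\mathbb{R})} = O(1),
\end{equation*}
using the $j=0$ case of \eqref{e4.16}, while a similar argument on $\hat V = \tilde{C}^\epsilon/\tilde{S}^\epsilon$ (which satisfies the linear advection--diffusion equation in \eqref{e3.5}) gives a local $L^\infty$ bound on $V^\epsilon$. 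For higher-order spatial derivatives I would differentiate \eqref{e3.4} under the integral sign. Each such differentiation pulls down a factor of order $\epsilon^{-1}$ from the exponent $\exp(-\theta/(2\epsilon))$, but the Hopf--Cole ratio structure absorbs one such factor at order zero, and the residual growth is polynomial; this gives $\|\partial_x^\ell u^\epsilon\|_{L^\infty(K)} = O(\epsilon^{-N_\ell})$ on any compact $K\subset\Omega$, and analogously for $v^\epsilon$. Time derivatives are then traded for spatial derivatives using the equation \eqref{e3.1} itself, yielding the full moderateness estimate \eqref{e2.4}.

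For the association, note that by Theorem 3.1 the representatives $(u^\epsilon, v^\epsilon)$ solve the parabolic system \eqref{e3.1} exactly, so the residual of the first equation in \eqref{e2.20} is identically $\epsilon e^{-\alpha t}\,u^\epsilon_{xx}$. For any $\phi\in C_c^\infty(\Omega)$, integration by parts twice gives
\begin{equation*}
\iint \epsilon e^{-\alpha t}\,u^\epsilon_{xx}\,\phi\,dx\,dt = \epsilon\iint e^{-\alpha t}\,u^\epsilon\,\phi_{xx}\,dx\,dt = O(\epsilon) \to 0
\end{equation*}
by the uniform $L^\infty$ bound on $u^\epsilon$. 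The residual of the second equation is $\epsilon e^{-\alpha t}\,v^\epsilon_{xx}$; writing $v^\epsilon=\partial_x V^\epsilon$ and integrating by parts three times converts this into $-\epsilon\iint e^{-\alpha t}\,V^\epsilon\,\phi_{xxx}\,dx\,dt$, which is $O(\epsilon)$ by the local $L^\infty$ bound on $V^\epsilon$. Hence both equations of \eqref{e2.20} hold in the sense of association.

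The main obstacle will be the rigorous book-keeping of the moderateness estimates for derivatives of arbitrary order, since each differentiation of \eqref{e3.4} generates $\epsilon^{-1}$ factors from the exponential, compounded by the $\epsilon^{-j}$ growth of $\partial_x^j v_0^\epsilon$ that \eqref{e4.16} permits. The cleanest approach is induction on the derivative order, exploiting the heat-kernel representation of $\tilde{S}^\epsilon$ and $\tilde{C}^\epsilon$ and expressing $\partial_x^k$-derivatives as convolutions of the heat kernel at time $\epsilon\tau$ against $k$-fold derivatives of the Colombeau initial data, so that standard heat-kernel estimates yield only polynomial blow-up in $\epsilon^{-1}$, confirming moderateness.
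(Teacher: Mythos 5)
Your proposal is correct and follows essentially the same route as the paper: moderateness of $(u^\epsilon,v^\epsilon)$ from the explicit Hopf--Cole formulas with derivative losses of fixed polynomial order in $\epsilon^{-1}$ traced back to the estimates \eqref{e4.16}, followed by association via integration by parts against a test function and the uniform $L^\infty$ bounds on $u^\epsilon$ and $V^\epsilon$. The paper organizes the derivative bookkeeping by the change of variables $z=(x-y)/(4\tau\epsilon)^{1/2}$, writing $u^\epsilon$ and $V^\epsilon$ as averages of the (shifted) initial data against the normalized Gaussian weight \eqref{e4.22} and then applying Leibniz's rule to get $\mathcal{O}(\epsilon^{-2(j+k)})$ --- which is precisely the ``heat kernel convolved with derivatives of the Colombeau initial data'' device you sketch at the end, so your plan matches the paper's proof in substance.
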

\begin{proof}
 To show that $u$ and $v$ are in ${\mathcal G}(\Omega)$  
we need to prove that 
$u^\epsilon$ and $v^\epsilon$ 
satisfies the estimate \eqref{e2.3}. For this, we need to rewrite the formula for $u^\epsilon$ and $v^\epsilon$. First, we observe that 
\begin{equation}
 \begin{aligned}
u^{\ep}= -2 \ep e^{ - \a t} \frac{  \frac{ \partial \tilde{S}^{\ep}}{\partial x}}{ \tilde{S}^{\ep}}, \,\,\,  
v^{\ep}= \frac{ \partial }{ \partial x} ( \frac{ \tilde{C}^{\ep} }{ \tilde{S}^{\ep}} ).   
\end{aligned}
\label{e4.17}
\end{equation}
where
\begin{equation}
 \begin{aligned}
&\tilde{C}^{\ep}(x, \tau)= \frac{1}{ \sqrt{4 \pi \tau \ep }} \int_ \mathbb{R} { (\int_0^y v_0(s) ds)  \exp\{- \frac{\theta(x,y,\tau)}{2  \ep}  \}  }dy\\
&\tilde{S}^{\ep}(x, \tau)= \frac{1}{ \sqrt{4 \pi \tau \ep }} \int_ \mathbb{R}{ \exp\{- \frac{\theta(x,y,\tau)}{2  \ep}  \} } dy.
\end{aligned}
\label{e4.18}
\end{equation}

Now, using the expression for $\theta (x,y,\tau) = \frac{(x-y)^2}{2 \tau} +\int_0^y u_0^\epsilon(z)$ and using the change of variable $(x-y)/(4 \tau \epsilon)^{1/2} =z$, we can write
\[
\tilde{S}^\epsilon(x,t)=
\frac{1}{\pi} \int_{R} e^{-(z^2 +\frac{1}{2 \ep} \int_0^{x-(4 \tau \ep)^{1/2} z} u_0^\ep(s) ds)} dz.
\]
 Taking derivative of this expression with respect to $x$, we have
\[
\frac{ \partial }{ \partial x} \tilde{S^\epsilon}(x,t)=-\frac{1}{2 \pi \ep} \int_{R} u_0^\ep( x-(4 \tau \ep)^{1/2}z)e^{-(z^2 + \frac{1}{2 \epsilon} \int_0^{x-(4 \tau \ep)^{1/2} z}u_0^\ep(s) ds)} dz.
\]
Using this in \eqref{e4.17}, we get

\begin{equation}
u^\epsilon(x,t)=e^{-\alpha t} \frac{ \int_{R} u_0^\ep( x-(4 \tau \ep)^{1/2}z)e^{-(z^2 + \frac{1}{2 \epsilon} \int_0^{x-(4 \tau \ep)^{1/2} z} u_0^\ep(s) ds)} dz}{ \int_{R} e^{-(z^2 + \frac{1}{2 \epsilon} \int_0^{x-(4 \tau \ep)^{1/2} z}u_0^\ep(s) ds)}dz}.
\label{e4.19}
\end{equation}
Similarly,
\begin{equation}
v^\epsilon(x,t)=\partial_x V^\ep(x,t),\,\,V^\ep(x,t)=\frac{ \int_{R} \int_0^{x-(4 \tau \ep)^{1/2} z} v_0^\ep(s) ds\,\, e^{-(z^2 + \frac{1}{2 \epsilon} \int_0^{x-(4 \tau \ep)^{1/2} z} u_0^\ep(s) ds)} dz}{ \int_{R} e^{-(z^2 + \frac{1}{2 \epsilon} \int_0^{x-(4 \tau \ep)^{1/2} z}u_0^\ep(s) ds)}dz}.
\label{e4.20}
\end{equation}
Note that this can be written as
\begin{equation}
\begin{aligned}
u^\epsilon(x,t)&=e^{-\alpha t}  \int_{R} u_0^\ep( x-(4 \tau \ep)^{1/2}z) d\mu_{x,t}^\ep(z) \\
v^\epsilon(x,t)&=\partial_x V^\ep,\,\, V^\ep= \int_{R} (\int_0^{x-(4 \tau \ep)^{1/2} z} v_0^\ep(s) ds) \,\,d\mu_{x,t}^\ep(z)
\end{aligned}
\label{e4.21}
\end{equation}
where $d\mu_{x,t}(z)$ is a probability measure in $z$ variable parametrized by $\{(x,t), x \in R, t \geq 0\}$:
\begin{equation}
d\mu_{(x,t)}^\ep(z)=\frac{ e^{-(z^2 + \frac{1}{2 \epsilon} \int_0^{x-(4 \tau \ep)^{1/2} z} u_0^\ep(s) ds)} dz}{ \int_{R} e^{-(z^2 + \frac{1}{2 \epsilon} \int_0^{x-(4 \tau \ep)^{1/2} z}u_0^\ep(s) ds)}dz}.
\label{e4.22}
\end{equation}

From the formulas of $u^\ep,v^\ep$, it follows from  Leibinitz's rule that $\partial_x ^k u^\ep$  are integrals with integrands that are polynomials of degree at most $k$  which are derivatives of $u_0^\epsilon$ up to order $k$ and  $\partial_x ^k v^\ep$  are integrals of the form \eqref{e4.21} with respect to the measure  \eqref{e4.22} with integrands being polynomials of degree at most $k+1$  which are derivatives of $u_0^\epsilon$ and $\int_0^x v_0^\ep(y) dy$ up to order $(k+1)$.

From above observations  and estimate \eqref{e4.16}, it is easy to see the estimates,
\begin{equation*}
\parallel \partial^{k}_{x} u^{\epsilon} \parallel_
{L^{\infty}(\Omega_T)} = {\mathcal O}(\epsilon^{-2 k}) ,
 \parallel \partial^{\alpha}_{x} v^{\epsilon} \parallel_
{L^{\infty}(\Omega_T)} = {\mathcal O}(\epsilon^{-2 k}). 
\end{equation*}
where $\Omega_T =\{(x,t) :x \in R, 0<t<T \}$, for $T>0$.

Similarly, applying the differential operator ${\partial_t} ^{j} {\partial_x} ^
{k}$ on both sides of \eqref{e4.19} and \eqref{e4.20} $k=1,2,3, \cdots; \,\,\, j=0,1,2, 
\cdots $
we get,
\[
\parallel \partial^{j}_{t} \partial^{k}_{x} u^{\epsilon} 
\parallel_
{L^{\infty}(\Omega_T)} = {\mathcal O}(\epsilon^{-2(j+k)})
\]
\[ 
\parallel \partial^{j}_{t} \partial^{k}_{x} v^{\epsilon} \parallel_
{L^{\infty}(\Omega_T)} = {\mathcal O}(\epsilon^{-2(j+k)}).
\] 
These estimates show that $u$ and $v$ are in ${\mathcal G}(\Omega)$.

Now, we  show that $u$ and $v$  satisfy the equation \eqref{e2.20} in the 
sense of
association. We multiply \eqref{e3.1} by a test function $\phi \in 
C_0^\infty(\Omega)$ and integrate  to get
\[
\int_{0}^{\infty}\!\!\int_{-infty}^\infty(u^\epsilon_t+ 
(1/2)({u^\epsilon}^2)_x 
   \phi\,dx\,dt=
\frac{\epsilon}{2}\int_{0}^{\infty}\!\!\int_{-infty}^\infty \
u^\epsilon 
\phi_{xx} \,dx\,dt
\]
\[
\int_{0}^{\infty}\!\!\int_{-\infty}^\infty(v^\epsilon_t+ (u^\epsilon 
v^\epsilon)_{x}\phi \,dx\,dt
=-\frac{\epsilon}{2}
\int_{0}^{\infty}\!\!\int_{-\infty}^\infty {V^\epsilon} 
{\phi}_{xxx}\,dx\,dt.
\]
We have to show that the right hand side goes to zero as $\epsilon$
goes to zero. This easily follows  by an
application of dominated convergence theorem,
as $u^\epsilon(x,t)$ and $V^\epsilon$ are 
bounded and 
converge pointwise almost every where. This completes the proof of
the theorem.
\end{proof}

\section{Conclusion}

The system of equations  \eqref{e1.1} is of extreme importance and  has a variety of applications including the analysis of the Eulerian droplet model for air particle flow.  The solutions of this system do not belong to the classical $L^\infty$ or the  space of functions of bounded variation, but generally are in the space of measures, even if we start with  smooth initial data. This is due to concentration of mass which leads to formation of $\delta$ waves in the density component. The particular case when $\alpha\neq 0$  has important applications, like in the modeling of the large scale structure of the universe and sticky particle dynamics and is well understood. For general $\alpha>0$, the system \eqref{e1.1} is not very well studied due to the complexities in the analysis.  As far as we know, the only work done on this system is for Riemann type of initial data. The significance of our work is in the consideration of  general initial data and not just initial data of the Riemann type. In this work,  the exact solution to the Cauchy problem for a parabolic approximation of \eqref{e1.1} with general  initial condition \eqref{e1.2} is studied by adding a viscous term with a small coefficient.  Using this, we construct the explicit weak asymptotic solution  for \eqref{e1.1} having general initial data and the weak solution in the sense of Colombeau. Later, we  derived the vanishing viscosity limit of the solution and showed that the limit satisfies the inviscid system in the sense of distributions. This regularization is shown to be justified, as the characteristic speed $u$ satisfies the Lax shock inequality. In this study we also obtained the large time behaviour of solution to the viscous system. \\

{ \flushleft

\section*{Acknowledgements} 

The author would like to thank the referee for the constructive comments and suggestions that has improved this paper. \\
\vspace{0.25 cm}
Data availability statement: None \\

Funding statement: None \\

Conflict of interest disclosure: None \\

Ethics approval statement: None \\

Patient consent statement: None \\

Permission to reproduce material from other sources: Not Applicable\\

Clinical trial registration: Not Applicable\\
}
                
\section*{ORCID}
Kayyunnapara Divya Joseph https://orcid.org/0000-0002-4126-7882

\end{small}

\end{document}